\definecolor{amaranth}{rgb}{0.9, 0.17, 0.31}
\newcommand\tl{{}^t\kern -2pt}
\newcommand\C{\mathbb C}
\newcommand\R{\mathbb R}
\newcommand\Q{\mathbb Q}
\newcommand\Z{\mathbb Z}
\newcommand\ocK{\overline{\cK}}
\newcommand\NS{\operatorname{NS}}
\title{On lattice-polarized K3 surfaces}
\date{November 3, 2025}
\author{Valery Alexeev}
\email{valery@uga.edu}
\address{Department of Mathematics, University of Georgia, Athens, GA 30602, USA}
\author{Philip Engel} 
\email{pengel@uic.edu}
\address{Department of Mathematics, Statistics, 
and Computer Science, University of Illinois in Chicago,
Chicago, IL 60607, USA}
\begin{document}

\begin{abstract}
  We propose modifications to
  the commonly used definitions of 
  lattice-polarized and lattice-quasipolarized 
  \emph{smooth} K3 surfaces, collecting various
  versions of the definition, and determining the
  effects of these choices on the resulting
  moduli space. 
  We fill a gap in the theory, by
  replacing Weyl chambers with the
  new notion of a ``small cone'':
  the true datum in the definition
  of lattice quasipolarized 
  K3 surfaces.
  In addition, we describe
  the separated moduli stack and moduli space
  for lattice-polarized K3 surfaces with $ADE$ singularities, an important notion for applications.

\end{abstract}

\maketitle
\setcounter{tocdepth}{1}
\tableofcontents

\section{Introduction}

Lattice polarizations on K3 surfaces, 
generalizing ordinary
polarizations, were introduced and studied in an influential 1996 paper \cite{dolgachev1996mirror-symmetry} by Dolgachev. 
Unfortunately, Theorem~3.1 
describing the fibers
of the period map for lattice quasipolarized
K3 surfaces
in \emph{ibid.}~is incorrect with the
given definitions, 
and this 
misconception
has persisted throughout the 
literature to this day. In this paper, we give
the corrected definition (Definition~\ref{def:mqpol}), 
for lattice-quasipolarized K3 surfaces for which
\cite[Theorem~3.1]{dolgachev1996mirror-symmetry} 
remains true (Theorem~\ref{qpol-thm}). 

The difference between our definition and that of \cite{dolgachev1996mirror-symmetry} is
explained in Remark~\ref{rem:difference-between-defs}.
The two definitions 
agree over a Zariski open subset of the period space,
and their difference only becomes apparent at
non-separated points of moduli, where the
fiber of the period map jumps.

In \cite{dolgachev1996mirror-symmetry}, only
smooth K3 surfaces are considered. In Section~\ref{sec:ade-moduli}, 
we define lattice-polarized K3 surfaces
with ADE singularities (Definitions~\ref{def:lambda-pol1} and
\ref{def:lambda-pol2}). 

Furthermore, we describe the precise structure of the stacks  
both of the lattice-quasipolarized smooth K3 surfaces (Theorem~\ref{F-lambda-q}) and of the 
lattice-polarized $ADE$ K3 surfaces (Theorem~\ref{smooth-coarse} and Corollary~\ref{cor:F_Lambda for any h}).
Unlike the former, the latter stack is separated. The coarse moduli space of K3 surfaces with $ADE$ singularities is separated as well. This is the version of the moduli space that is useful for many applications, for example it was used in \cite{alexeev2023compact}. The (generalized) small cones 
introduced in Definitions~\ref{small-cone} and \ref{def:generalized small cone} are essential for this version of the moduli space.

We expect that our definition
of lattice (quasi)polarization
and the description of the stacks can be 
generalized to the setting 
of moduli of $K$-trivial varieties,
especially hyperk\"ahler varieties.

This material was included in 
\cite{alexeev2021compact}, 
an arXiv version of \cite{alexeev2023compact}, but not in the condensed, published version.
We have decided to make it widely available as a 
separate paper, to both clarify the problem 
and provide a solution to it.

Throughout the paper, we work over $\bC$.

\begin{acknowledgements}
  The authors were partially supported by the NSF, the first author under DMS-2201222 and the second author under DMS-2401104. 
\end{acknowledgements}

\section{Smooth vs ADE K3 surfaces}
\label{sec:smooth-vs-ADE}

\begin{definition}
  A {\it K3 surface} $X$ is a compact, complex, 
  smooth surface
  which has a trivial canonical bundle $\cO(K_X)\simeq\mathcal{O}_X$
  and irregularity zero, i.e.~$h^1(X, \cO_X)=0$.
\end{definition}

Any complex K3 surface is K\"ahler \cite{siu1983every-k3}
and any K3 surface admitting
a line bundle $L$ with $L^2>0$ is
algebraic and projective. 
When speaking of moduli of K3 surfaces, one
is always faced with a choice: to work with smooth surfaces or
with surfaces that can possibly have ADE (i.e.~Du Val, 
or ordinary double point) singularities. 
Both options have their advantages:
\smallskip

(1) Smooth K3 surfaces are all 
diffeomorphic and share the same
cohomology ring.
The price that one pays is
that instead of working with a \emph{polarization}, 
i.e.~an ample line bundle $L$, one is forced to 
work with a \emph{quasipolarization},
which is big, nef and semiample but whose 
intersection may be zero with finitely many 
smooth rational $(-2)$-curves on~$X$. An
unfortunate issue is that the moduli stacks 
and coarse moduli spaces of smooth quasipolarized 
K3 surfaces are not separated.\smallskip

(2) In contrast, when working with ADE K3 surfaces, 
one can work directly with polarized pairs $(\oX,\oL)$, 
where $\oL$ is an ample
line bundle. The moduli stacks and moduli 
spaces of polarized ADE surfaces 
are separated, at the cost of working
with singular varieties.
In other words,
the filling $(\overline{\cX},\overline{\cL})\to 
(C,0)$ of a family
of polarized ADE K3 surfaces 
$(\overline{\cX}\,\!^*, \overline{\cL}\,\!^*)\to C^*$
over a punctured curve is unique, if it exists.
Furthermore, the moduli stack is smooth, since
deformations of ADE K3 surfaces are unobstructed by \cite{burns1974local-contributions}.

\smallskip

For any individual polarized ADE surface $(\oX,\oL)$, 
the corresponding smooth
surface $X$ is the minimal resolution of 
singularities $\pi\colon X\to\oX$, and
$L = \pi^*(\oL)$. Vice versa, for a smooth quasipolarized surface
$(X,L)$, the line bundle $L$ is semiample and defines a contraction
$\pi\colon X\to\oX$ to its ADE companion, with $\oL = \pi_*(L)$. So,
the pairs $(X,L)$ and $(\oX,\oL)$ are in a natural bijection.

Only when working with families does the difference become
apparent, 
as was already noted in \cite{burns1975on-the-torelli}.
If 
$f\colon (\cX,\cL)\to (C,0)$ is a family of smooth
K3 surfaces for which the line bundle
$\cL_t$ is ample for a generic fiber $\cX_t$ but
$\cL_0$ is not ample on $\cX_0$, then
one can replace $f$ by a non-isomorphic family
$f^+\colon(\cX^+,\cL^+)\to (C,0)$ obtained from $\cX$ by a flop in a
$(-2)$-curve contained in the central fiber $\cX_0$. Fiberwise, the
two families are isomorphic, but not globally. As a consequence, the
moduli functor and the coarse moduli space of smooth quasipolarized
K3 surfaces are not separated.

However, the corresponding family
$\bar f\colon \overline{\cX}\to (C,0)$ of
ADE surfaces does not exhibit this behavior.
The extension over $0$ is unique:
It is obtained from $\cX$, resp.~$\cX^+$, 
by a contraction over $C$ defined by 
the semiample line bundle $\cL$, resp.~$\cL^+$. 

By \cite{brieskorn1970singular-elements}, 
a filling $(\overline{\cX},\overline{\cL})\to (C,0)$ 
of a family of polarized ADE K3 surfaces 
$(\overline{\cX}\,\!^*,\overline{\cL}\,\!^*)\to C^*$ admits, after a cyclic 
base change $(C',0)\to (C,0)$,
a simultaneous crepant resolution $(\cX,\cL)\to (C',0)$
 with $\cL$ relatively semiample.
The birational isomorphism $
\overline{\varphi}\colon \overline{\cX}\dashrightarrow 
\overline{\cX}\,\!^+$
between two such fillings 
extending ${\rm id}_{\overline{\cX}\,\!^*}$
induces then a birational isomorphism 
on the corresponding crepant resolutions 
$\varphi\colon \cX\dashrightarrow \cX^+$ 
over $C'$.
But then $\varphi^*(\cL^+)\sim_{C'} \cL$, so
$\varphi$ descends to
an isomorphism 
$\overline{\varphi}\,\!'\colon \overline{\cX}\,\!'
\to \overline{\cX}\,\!^{+}\,\!'$ of the relatively
ample models 
of $\cL$ and $\cL^+$
over $(C',0)$.
(For $f\colon \cX\to C$, the ample model of $\cL$ over $C$ is ${\Proj}_C \oplus_{d\ge0} f_*\cL^d$.)
These relatively ample models
$\overline{\cX}\,\!'$, $\overline{\cX}\,\!^{+}\,\!'$
are simply the base changes along $(C',0)\to (C,0)$
of $\overline{\cX}$ and $\overline{\cX}\,\!^+$.
Since $\overline{\varphi}'$ is an isomorphism, we deduce
that $\overline{\varphi}$ is too, as $\overline{\varphi}'$ is the base change of 
$\overline{\varphi}$.

\section{Analytic moduli}\label{analyticK3}

We begin by setting the
notation and reviewing fundamental results about analytic K3 surfaces.
For general references, see \cite{huybrechts2016lectures-on-k3} or 
\cite{asterisque1985geometrie-des-surfaces}.

The cup product endows $H^2(X,\Z)$ with a symmetric bilinear form 
$\cdot$, making it 
isometric to the unique even unimodular lattice of signature $(3,19)$. 
Let $\Omega\in H^0(X,K_X)$ denote a non-vanishing holomorphic 
$2$-form on $X$. Then there is a Hodge decomposition 
$$H^2(X,\C)=H^{2,0}(X)\oplus H^{1,1}(X)\oplus H^{0,2}(X)$$ 
with $H^{2,0}(X)=\C[\Omega]$ a complex line.
It is orthogonal with respect
to the Hodge form $(\alpha,\beta)\mapsto \alpha \cdot \overline{\beta}$.
By the Lefschetz $(1,1)$-theorem, the N\'eron--Severi group is
$$\NS(X)=H^{1,1}(X)\cap H^2(X,\Z)=[\Omega]^\perp\cap H^2(X,\Z).$$ 
Here, $H^{1,1}(X,\bR)\subset H^2(X,\bR)$ is 
$[\Omega]^\perp:=\langle \operatorname{Re}\Omega, 
\operatorname{Im}\Omega\rangle^\perp$.
Because $h^1(X)=0$ and $\pi_1(X)=1$,
we have $\NS(X)={\rm Pic}(X)$. 

\begin{definition}
  The {\it roots} of $X$ are the elements $\beta \in \NS(X)$ 
  for which $\beta^2=-2$. Associated to each root is a reflection
  $r_\beta\in O(H^2(X,\Z))$ defined by
  $$r_\beta(v) = v+(v\cdot \beta)\beta.$$ We define the {\it Weyl
    group} be the group
  $W_X:=\langle r_\beta \,:\, \beta\textrm{ a root of
  }X\rangle\subset O(H^2(X,\Z))$, 
  generated by reflections in the roots.
\end{definition}

\begin{remark}
  The group $W_X$ may be infinite because the roots $\beta$ lie in a
  vector space $H^{1,1}(X,\R)$ of hyperbolic signature $(1,19)$, and
  may be infinite in number.
\end{remark}

\begin{definition}
  The {\it K\"ahler cone} $\mathcal{K}_X\subset H^{1,1}(X,\R)$ is the
  set of classes of K\"ahler forms on $X$.  Denote by $\ocK_X\subset H^{1,1}(X,\R)$
  the closure of $\mathcal{K}_X$ in the set of positive norm vectors.
\end{definition}

Every K3 admits a K\"ahler form
  \cite{siu1983every-k3}. 
There are two connected components
$\mathcal{C}_X\sqcup (-\mathcal{C}_X)\subset H^{1,1}(X,\R)$ of
positive norm vectors, which are distinguished by the property
$\mathcal{K}_X\subset \mathcal{C}_X$. We call $\mathcal{C}_X$ the {\it
  positive cone}. Then $\ocK_X$ is a fundamental chamber for the
action of $W_X$ on $\mathcal{C}_X$.  We may now state the (first part
of the) Torelli theorem:

\begin{theorem}[\cite{piateski-shapiro1971torelli, burns1975on-the-torelli,
looijenga1980torelli}]\label{torelli-i}
  Two analytic K3 surfaces $X$ and $X'$ are isomorphic if and only if
  they are {\rm Hodge-isometric}, i.e.~there 
  exists an isometry
  $i : H^2(X',\Z)\to H^2(X,\Z)$ for which
  $i(H^{2,0}(X'))=H^{2,0}(X)$. 
  Furthermore, $i=f^*$ for an
  isomorphism $f:X\to X'$ if and only if
  $i(\mathcal{K}_{X'})=\mathcal{K}_X$. 
  This isomorphism $f$ is uniquely 
  determined by $i$.
\end{theorem}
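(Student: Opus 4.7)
The plan is to split the biconditional into its easy and hard directions, reduce the ``weak'' Torelli statement (existence of some isomorphism given a Hodge isometry) to the ``strong'' one (existence of $f$ with $f^*=i$ when $i$ preserves the K\"ahler cone) via the Weyl group $W_X$, and then tackle the strong statement by the classical argument through Kummer surfaces.

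The easy directions are immediate. If $f\colon X\to X'$ is an isomorphism, then $i:=f^*$ is a Hodge isometry (since $f^*$ respects cup product and the Hodge decomposition) and sends K\"ahler classes to K\"ahler classes. For uniqueness, if $f_1^*=f_2^*=i$, then $f_2^{-1}\circ f_1$ is an automorphism of $X$ acting trivially on $H^2(X,\Z)$, hence the identity by a standard rigidity result for K3 surfaces (fixing an ample class pins down the automorphism up to finite ambiguity, and fixing the entire Hodge structure kills the remaining possibilities).

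Next I would reduce the weak direction to the strong. Given any Hodge isometry $i$, the real extension $i_\R\colon H^{1,1}(X',\R)\to H^{1,1}(X,\R)$ is a Lorentzian isometry, which permutes the two components $\pm\mathcal{C}$ of positive-norm vectors; after possibly replacing $i$ by $-i$, one has $i_\R(\mathcal{C}_{X'})=\mathcal{C}_X$. Because $i$ takes roots of $X'$ to roots of $X$, it intertwines the two Weyl group actions, so $i_\R(\mathcal{K}_{X'})$ is a chamber for $W_X$ on $\mathcal{C}_X$. Since $\ocK_X$ is another such chamber, and each reflection $r_\beta\in W_X$ is an \emph{integral} Hodge isometry (because $\beta\in\NS(X)\subset H^{1,1}$ forces $r_\beta$ to act trivially on $H^{2,0}(X)$), there exists $w\in W_X$ with $w\circ i_\R(\mathcal{K}_{X'})=\mathcal{K}_X$. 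Granted the strong statement, $w\circ i=f^*$ for an isomorphism $f$, and so $X\simeq X'$.

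It remains to produce $f$ from a K\"ahler-cone-preserving Hodge isometry $i$, and this is where the bulk of the work lies. The classical Piatetski-Shapiro--Shafarevich strategy treats Kummer surfaces $\mathrm{Km}(T)$ first: for $T=\C^2/\Lambda$ a complex torus, a Hodge isometry of $H^2(\mathrm{Km}(T),\Z)$, once one accounts for the sublattice spanned by the $16$ exceptional $(-2)$-curves, descends to a Hodge isometry of $H^1(T,\Z)$, reducing Torelli for $\mathrm{Km}(T)$ to the Torelli theorem for complex tori, which is essentially a statement about weight-one Hodge structures on a lattice. The general case then follows by a density-plus-continuity argument using that Kummer periods are dense in the period domain and that fiberwise isomorphisms in smooth families of marked K3 surfaces behave well under specialization. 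The main obstacle is precisely this last step: its technical heart is the Burns--Rapoport extension lemma, which asserts that an isomorphism on a punctured family extends across the central fiber, preventing limits of fiberwise isomorphisms between nearby Kummer K3 surfaces from ``evaporating'' and thus forcing an abstract Hodge isometry of the general member to come from a geometric isomorphism. Looijenga's proof \cite{looijenga1980torelli} reorganizes this via surjectivity of the period map and the $W$-action on chambers, but the essential difficulty remains the same.
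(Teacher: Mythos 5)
This theorem is not proved in the paper; it is quoted verbatim from \cite{piateski-shapiro1971torelli} and \cite{looijenga1980torelli}, so there is no internal argument to compare against. Your sketch is a correct outline of the classical proof, and its first two layers are exactly the manipulations the paper itself performs right after the theorem (in setting up the exact sequence (\ref{exact}) and Corollary \ref{cor:unpolarized-period-map}): the easy direction is formal; reflections $r_\beta$ in roots are integral Hodge isometries because $\beta\in H^{1,1}$ pairs to zero with $H^{2,0}$; a Hodge isometry carries roots to roots and hence chambers to chambers, so composing with a sign and an element of $W_X$ reduces the weak statement to the strong one. The genuinely hard content is deferred to named black boxes (Torelli for complex tori and Kummer surfaces, density of Kummer periods in $\mathbb{D}$, and the Burns--Rapoport Main Lemma guaranteeing that limits of fiberwise isomorphisms do not degenerate), which is reasonable for a result of this scale but means your text is an outline rather than a proof. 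Two points to tighten: (1) the rigidity lemma that an automorphism acting trivially on $H^2(X,\Z)$ is the identity is itself part of the Torelli package and deserves its standard argument --- such an automorphism fixes a K\"ahler class (not ``ample'': the theorem covers non-projective analytic K3s), hence lies in a compact subgroup of $\mathrm{Aut}(X)$, which is discrete since $H^0(X,T_X)=0$, so the automorphism has finite order, and a finite-order automorphism trivial on $H^2$ is then ruled out by the holomorphic and topological Lefschetz fixed-point formulas; (2) after possibly replacing $i$ by $-i$, the conclusion of the weak direction should read $f^*=\pm w\circ i$, which still yields $X\simeq X'$ but is worth stating precisely since $-1$ never preserves the K\"ahler cone.
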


Note that $\pm 1$ and $g\in W_X$ act by Hodge isometries on
$H^2(X,\Z)$. For any Hodge isometry $i$ between $X'$ and $X$, there is
a unique sign and unique element $g\in W_X$ such that
$\pm g\circ i(\mathcal{K}_{X'})=\mathcal{K}_X$ in which case, there is
an isomorphism $f:X\to X'$ satisfying $f^* = \pm g\circ i$. Thus, the
group of Hodge isometries of $X$ fits into a split exact sequence of groups
\begin{align}\label{exact}
    0\to \{\pm 1\}\times W_X\to {\rm HodgeIsom}(X)\to {\rm Aut}(X)\to 0.
\end{align}

\begin{definition}
Let $L_{K3}:=I\!I_{3,19}=U^{\oplus 3}\oplus E_8^{\oplus 2}$
be a fixed copy of the unique even unimodular lattice of
signature $(3,19)$, i.e.~the K3 lattice.
\end{definition}

Here, $U= I\!I_{1,1}$, and our convention 
is that $E_8$ is negative-definite.

\begin{definition}
Let $\pi \colon \mathcal{X}\to S$ be a family of
  smooth analytic K3 surfaces over an analytic space $S$. A {\it
    marking} is an isometry of local systems
  $\mu\colon R^2\pi_*\underline{\Z}\to \underline{L}_{K3}$.
\end{definition}

An analytic family $\mathcal{X}\to S$ admits a marking if and only if
the monodromy of the Gauss-Manin connection is trivial. If $S$ is a
point, then $\mathcal{X}=X$ is a K3 surface together with an isometry
$\mu\colon H^2(X,\Z)\to L_{K3}$. A marking over a single fiber
extends canonically to a marking over any simply connected base $S$.

\begin{definition}
  An {\it isomorphism of marked K3 surfaces}
  $f:(X,\mu)\to (X',\mu')$ is an isomorphism $f:X\to X'$ such
  that $\mu \circ f^* = \mu'$.
\end{definition}

Note that marked K3 surfaces have no nontrivial automorphisms: By
Theorem \ref{torelli-i}, the homomorphism
${\rm Aut}(X)\to O(H^2(X,\Z))$ is faithful.

\begin{definition} 
The {\it period domain} of analytic K3 surfaces is
  $$\mathbb{D}:=\mathbb{P}\{x\in L_{K3}\otimes \C\,:\, x\cdot x=0,
  \,x\cdot \overline{x}>0\}.$$ It is an analytic open subset of a
  $20$-dimensional quadric in $\mathbb{P}^{21}$. Let
  $(\mathcal{X}\to S, \,\mu)$ be a marked family of K3
  surfaces. The {\it period map} $P$ is defined by
  \begin{displaymath}
    P\colon S\to \mathbb{D}, \quad s\mapsto \mu(H^{2,0}(X_s)).
  \end{displaymath} 
\end{definition}

Since $H^0(X,T_X)=H^2(X,T_X)=0$, the deformation theory
of compact complex manifolds \cite{kodaira1986complex-manifolds}, 
\cite[Theorem 17]{kodaira1964on-the-structure}
implies the existence of a Kuranishi family:

\begin{theorem}
  Let $(X,\mu)$ be a marked K3 surface. There exists a family of K3
  surfaces $\mathcal{X}\to U$ over a complex analytic ball
  $U\subset \C^{20}$, which defines 
  a universal deformation of $X$, and
  such that the induced period map
  $P\colon U\to \mathbb{D}$ is an isomorphism onto its image.
\end{theorem}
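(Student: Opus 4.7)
The plan is to first produce the family $\mathcal{X}\to U$ via Kuranishi's theorem, then verify by a tangent-space computation that the period map $P$ is a local biholomorphism.

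For the construction of the family I would invoke the general Kuranishi theorem: for any compact complex manifold, there exists a semi-universal deformation over an analytic neighborhood $U$ of $0$ in $H^{1}(X,T_{X})$, whose base is cut out inside $H^{1}(X,T_{X})$ by an obstruction map valued in $H^{2}(X,T_{X})$. For a K3 surface, contraction with the symplectic form $\Omega$ gives an isomorphism $T_{X}\simeq\Omega^{1}_{X}$, so $H^{1}(X,T_{X})\simeq H^{1,1}(X)$ is $20$-dimensional and $H^{2}(X,T_{X})\simeq H^{1,2}(X)=0$. Consequently, deformations are unobstructed, $U$ is a ball in $\mathbb{C}^{20}$, and because $H^{0}(X,T_{X})=H^{1,0}(X)=0$ the semi-universal family is in fact universal. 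A marking $\mu$ on the central fiber extends uniquely to a marking of the whole family over the simply connected base $U$.

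Next I would identify $T_{[\Omega]}\mathbb{D}$. The period domain is an open subset of the smooth quadric $\{x\cdot x=0\}\subset \mathbb{P}(L_{K3}\otimes\mathbb{C})$, so $T_{[\Omega]}\mathbb{D}=\Omega^{\perp}/\mathbb{C}\Omega$ where $\perp$ is taken with respect to the bilinear form. Using the Hodge decomposition together with $\Omega\cdot\Omega=0$ and $\Omega\cdot\overline{\Omega}>0$, one checks $\Omega^{\perp}=\mathbb{C}\Omega\oplus H^{1,1}(X)$, so $T_{[\Omega]}\mathbb{D}\simeq H^{1,1}(X)$, matching the dimension of $U$.

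The key computation is that the differential $dP_{0}\colon T_{0}U\to T_{[\Omega]}\mathbb{D}$ agrees, under the two identifications above, with contraction against $\Omega$:
\[
H^{1}(X,T_{X})\ \longrightarrow\ H^{1}(X,\Omega^{1}_{X}),\qquad \xi\mapsto \xi\lrcorner\,\Omega.
\]
This is the standard description of the derivative of a period map via the Kodaira--Spencer class and Griffiths transversality; I would derive it by differentiating a trivialization of the local system $R^{2}\pi_{*}\underline{\mathbb{Z}}$ along a first-order deformation and reading off the $(2,0)$-component of the new holomorphic symplectic form. Since $\Omega$ trivializes $K_{X}$, contraction with $\Omega$ is an isomorphism of sheaves $T_{X}\xrightarrow{\sim}\Omega^{1}_{X}$, hence an isomorphism on $H^{1}$. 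Combined with $\dim U=\dim\mathbb{D}=20$, the holomorphic inverse function theorem implies that, after possibly shrinking $U$, the map $P$ is a biholomorphism onto its image.

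The main obstacle is justifying the cup-product-with-$\Omega$ formula for $dP_{0}$ cleanly; once that is in hand, everything else is bookkeeping with Hodge structures and a single application of the inverse function theorem. The smoothness and dimension of the Kuranishi base, and the holomorphicity of $P$, are inputs from Kodaira--Spencer theory that I would cite from \cite{kodaira2006complex} rather than reprove.
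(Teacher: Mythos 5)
Your argument is correct and is the standard proof of local Torelli for K3 surfaces: unobstructedness from $H^2(X,T_X)\simeq H^{1,2}(X)=0$, universality from $H^0(X,T_X)=0$, and the identification of $dP_0$ with contraction $\xi\mapsto\xi\lrcorner\,\Omega\colon H^1(X,T_X)\to H^{1,1}(X)\simeq\Omega^\perp/\C\Omega$. The paper gives no proof of this theorem, citing \cite{kodaira2006complex} for exactly the deformation-theoretic input you use, so your write-up simply supplies the standard details behind that citation.
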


Recall that an analytic 
deformation $\pi\colon \mathcal{X}\to (U,0)$
of the fiber $X=\pi^{-1}(0)$ is 
{\it universal} if it is the final
object in the category of germs of 
deformations of $X$. That is, given any flat 
family deforming $X$ over a base $(B,b)$,
there is a sufficiently small 
analytic neighborhood of the point
$b\in B$ over which $X$ fibers, 
such that the restricted family 
is the pullback
of $\pi$ along a unique map $(B,b)\to (U,0)$.

\begin{corollary}\label{per-iso}
  A family of marked K3 surfaces 
  $\mathcal{X}\to S$ over a smooth base
  $S$ is universal at every fiber $X_s$ where the differential $dP:T_sS\to T_{P(s)}\mathbb{D}$ of the
  period map is an isomorphism.
\end{corollary}

From these facts one may construct the moduli space of marked analytic
K3 surfaces by gluing together deformation spaces, 
as in \cite{burns1975on-the-torelli,
looijenga1980torelli, asterisque1985geometrie-des-surfaces}.

\begin{theorem}
\label{glue-def-smooth}
  There exists a fine moduli space $\cM$ of marked analytic 
  K3 surfaces, a non-separated, smooth complex
  analytic space
  of dimension $20$ endowed with an 
  \'etale period map $P\colon\cM\to \bD$, admitting a 
  universal family $\mathcal{X}\to \mathcal{M}$.
\end{theorem}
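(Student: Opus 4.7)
The plan is to construct $\cM$ by gluing the Kuranishi families produced by the preceding theorem along the canonical identifications supplied by the Torelli theorem. Because marked K3 surfaces have no nontrivial automorphisms, all gluing data are unique and the cocycle condition is automatic; the only genuinely interesting phenomenon to account for is non-Hausdorffness.

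For each marked K3 surface $(X_0,\mu_0)$, the preceding theorem yields a Kuranishi chart $\pi_\alpha\colon\cX_\alpha\to U_\alpha\subset\bC^{20}$ whose period map $P_\alpha\colon U_\alpha\to\bD$ is an open embedding. The marking $\mu_0$ extends canonically to a marking $\mu_\alpha$ of $R^2\pi_{\alpha *}\underline{\Z}$ by parallel transport along the simply connected base, and by Theorem~\ref{per-iso} the family is universal at every point of $U_\alpha$. If two fibers $(X_s,\mu_s)$ and $(X'_{s'},\mu'_{s'})$ with $s\in U_\alpha$ and $s'\in U_\beta$ happen to be isomorphic as marked K3 surfaces, then Theorem~\ref{torelli-i} together with the exact sequence~\eqref{exact} shows the isomorphism is unique, since the only element of $\{\pm 1\}\times W_X$ preserving the Kähler cone is the identity. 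Feeding this unique isomorphism into universality identifies neighborhoods of $s$ and $s'$ biholomorphically as marked families, and uniqueness forces the cocycle relation, so the charts glue into a complex $20$-manifold $\cM$ equipped with an étale period map $P$ and a tautological family $\cX\to\cM$. To check that $\cM$ represents the moduli functor, given any marked analytic family $\cY\to S$ and a point $s\in S$, universality of the Kuranishi chart at $\cY_s$ produces a unique local classifying map on a neighborhood of $s$; uniqueness of marked isomorphisms on overlaps then glues these into a global map $S\to\cM$ under which $\cY$ is the pullback of $\cX$.

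The step I expect to be the main obstacle is the non-Hausdorffness. Since the étale map $P$ is not injective while $\bD$ is Hausdorff, $\cM$ cannot itself be Hausdorff, but a concrete pair of non-separated points must be exhibited. Such a pair is furnished by the flop construction of Section~\ref{sec:smooth-vs-ADE}: starting from a one-parameter degeneration whose central fiber carries a $(-2)$-curve, there are two smooth models $\cX,\cX^+\to(C,0)$ whose markings, transported from the common generic fiber, select different Kähler cones at the central fiber. The two classifying maps $C\to\cM$ agree over $C\setminus\{0\}$ but send $0$ to distinct points of $\cM$ with identical period, and these points admit no disjoint open neighborhoods.
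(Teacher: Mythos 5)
Your construction matches the paper's sketch: glue the Kuranishi charts along the open loci parameterizing isomorphic marked fibers, using universality at every point and the absence of nontrivial automorphisms of marked K3 surfaces to make the gluing data unique and to get representability, and the flop degeneration you cite for non-Hausdorffness is exactly the phenomenon described in Section~\ref{sec:smooth-vs-ADE}. One caveat: your aside that non-injectivity of the \'etale map $P$ into the Hausdorff space $\bD$ already forces $\cM$ to be non-Hausdorff is not a valid implication (a non-trivial covering map is \'etale, non-injective, and can have Hausdorff source), but this does not affect the proof since your flop example correctly exhibits a genuine non-separated pair.
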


\begin{proof}[Sketch.]
  For every isomorphism type $(X,\mu)$ of marked K3 surface,
  construct a Kuranishi family
  $\mathcal{X}_{(X,\mu)}\to U_{(X,\mu)}$ over a contractible
  base, which is universal at all fibers (cf.~Corollary \ref{per-iso}), 
  and extend the marking
  $\mu$ on $X$ to the family. Then glue together these deformation
  spaces $$\mathcal{M}:=\bigcup_{(X,\mu)} U_{(X,\mu)}$$ by
  gluing $U_{(X,\mu)}$ and $U_{(X',\mu')}$ along the subset
  parameterizing isomorphic marked K3 surfaces. This subset is open
  because the family over $U_{(X,\mu)}$ is universal at every
  point, and the germ of a universal deformation is unique.

  Since marked K3 surfaces have no nontrivial automorphisms, there is
  a unique way to glue the two families
  $\mathcal{X}_{(X,\mu)}\to U_{(X,\mu)}$ and
  $\mathcal{X}_{(X',\mu')}\to U_{(X',\mu')}$ along this common
  open subset. So there exists a family $\mathcal{X}\to \mathcal{M}$
  of marked analytic K3 surfaces. Since we ranged over every
  isomorphism type and the universality holds at every point of
  $\mathcal{M}$, any marked family of analytic K3 surfaces is pulled
  back uniquely from $\mathcal{X}\to \mathcal{M}$. So $\mathcal{M}$ is
  a fine moduli space.
\end{proof}

The second part of the Torelli theorem is surjectivity of the period
map:

\begin{theorem}[\cite{todorov1980applications}]\label{torelli-ii}
  The period map $P\colon \mathcal{M}\to \mathbb{D}$ is surjective.
\end{theorem}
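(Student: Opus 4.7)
The plan is to show that $P(\mathcal{M})$ is both open and closed in the connected analytic manifold $\mathbb{D}$, whence surjectivity will follow from $\mathcal{M}$ being nonempty. Openness is immediate from Theorem \ref{glue-def-smooth}: the period map is étale, hence an open map. It therefore remains to show closedness, i.e.~that every $p \in \overline{P(\mathcal{M})}$ lies in $P(\mathcal{M})$.

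The first step is to produce a sufficiently large set of periods in $\mathbb{D}$ for which a marked K3 surface is known to exist. The classical choice is Kummer surfaces: for any complex $2$-torus $T = \mathbb{C}^2/\Lambda$, the minimal resolution $\mathrm{Km}(T)$ of $T/\{\pm 1\}$ is a K3 surface, and its Hodge structure on $H^2$ is determined explicitly by that of $T$. As $\Lambda$ varies, a direct computation in coordinates on $\mathbb{D}$ shows that Kummer periods form a dense subset, so it suffices to extend the image across each limit of Kummer periods (or more generally of periods already in $P(\mathcal{M})$).

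The extension step goes as follows. Given $p \in \overline{P(\mathcal{M})}$, choose a small holomorphic disc $\gamma : \Delta \to \mathbb{D}$ with $\gamma(0) = p$ and $\gamma(t) \in P(\mathcal{M})$ for $t \neq 0$; this is possible since $P(\mathcal{M})$ is open and contains the dense set of Kummer periods. Using étaleness of $P$, lift $\gamma|_{\Delta^*}$ to a family $\mathcal{X}^* \to \Delta^*$ of marked K3 surfaces inducing $\gamma$. To fill in the central fiber, invoke Kulikov's theorem on semistable degenerations of K3 surfaces: after a finite base change, $\mathcal{X}^*$ extends to a semistable family $\mathcal{Y} \to \Delta$. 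The hypothesis that the limit period $p$ lies in $\mathbb{D}$ itself, rather than on a boundary stratum of a suitable toroidal compactification, means the limit mixed Hodge structure is pure of weight $2$, forcing the Kulikov model to be of Type I — that is, to have smooth central fiber. That central fiber, with its induced marking, is a marked K3 surface with period exactly $p$.

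The main obstacle is this filling step. Lifting over the punctured disc is routine once $P$ is known étale, but extending across the origin is genuinely nontrivial: one must rule out Type II and Type III degenerations, which is precisely what the assumption $p \in \mathbb{D}$ accomplishes via the Clemens--Schmid analysis of the associated limit mixed Hodge structure. An alternative, more analytic route avoids Kulikov and instead uses twistor lines — each Kähler class on a given K3 produces a rational curve in $\mathbb{D}$ through its period, and such twistor lines cover $\mathbb{D}$ — at the cost of invoking the Ricci-flat Kähler metric supplied by Yau's theorem.
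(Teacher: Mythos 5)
This theorem is not proved in the paper at all: it is quoted as an external result, and the cited source (Todorov) proves it by the twistor-line method that you relegate to your final sentence as an ``alternative route.'' Your primary route is instead essentially Kulikov's earlier degeneration-theoretic proof. The open--closed scheme is sound in outline ($\mathbb{D}$ is indeed connected for signature $(3,19)$, and openness does follow from Theorem~\ref{glue-def-smooth}), but as written the closedness step has two genuine gaps.

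First, the reduction to a punctured-disc family is not ``routine.'' You need a disc $\gamma$ through $p$ with $\gamma(\Delta^*)\subset P(\mathcal{M})$; density of Kummer periods only tells you the complement of $P(\mathcal{M})$ is closed with empty interior, which does not by itself produce such a disc through an arbitrary point of that complement. Worse, even given such a $\gamma$, lifting it to a family over $\Delta^*$ is not automatic: $P$ is \'etale but not proper, $\mathcal{M}$ is non-Hausdorff, and the fibers of $P$ can be infinite, so the path-lifting property of covering maps is unavailable; a local lift can fail to continue around the puncture. Second, and more seriously, Kulikov's semistable reduction theorem is a statement about \emph{projective} (or at least algebraic) degenerations of K3 surfaces, whereas a general $p\in\mathbb{D}$ has $p^\perp\cap L_{K3}$ containing no class of positive square, so the nearby fibers are non-projective K\"ahler K3s and the theorem does not apply. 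At best your argument covers the locus of algebraic periods, which is a countable union of divisors, not all of $\mathbb{D}$. (The part of the argument you flag as the ``main obstacle''---ruling out Types II and III---is actually the easy part here: a marked family has trivial monodromy, so $N=0$ and any Kulikov model is automatically Type~I.) To prove the theorem as stated, for all K\"ahler K3s, one really does need the twistor-line argument (openness of the image, plus the fact that the compact twistor $\mathbb{P}^1$ through any point of the image, for any K\"ahler class, stays in the image, plus a connectedness argument via chains of generic twistor lines through the dense set of Kummer periods); that route rests on Yau's theorem, as you note, and is the content of the cited reference.
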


Let $x\in \mathbb{D}$ and define
$W_x:=\langle r_\beta\,:\, \beta\in 
x^\perp\cap L_{K3},\,\beta^2=-2\rangle$. 
If $P(X,\mu)=x$, then $W_X$ and $W_x$ are identified via
$\mu$, since 
${\rm NS}(X) = H^{2,0}(X)^\perp
\cap H^2(X,\bZ)$.
By Theorems \ref{torelli-i}, 
\ref{torelli-ii} and the description of the
K\"ahler cone, we have:

\begin{corollary}\label{cor:unpolarized-period-map}
  Let $x\in \mathbb{D}$ be a period. Then $P^{-1}(x)$ is a torsor over
  $\{\pm 1\}\times W_x$ with action given by
  $(X,\mu)\mapsto (X,g\circ \mu)$.
\end{corollary}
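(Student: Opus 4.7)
I would verify that $P^{-1}(x)$ is a non-empty set on which $G := \{\pm 1\} \times W_x$ acts freely and transitively.

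\emph{Non-emptiness and well-definedness of the action.} Non-emptiness of $P^{-1}(x)$ is Theorem \ref{torelli-ii}. Each generating reflection $r_\beta$ of $W_x$ has $\beta \perp x$ and thus fixes the complex line $x \subset L_{K3} \otimes \C$ pointwise; the element $-1 \in O(L_{K3})$ scales $x$ and so preserves it projectively. Hence every $g \in G$ fixes $x \in \mathbb{D}$, and the action restricts to $P^{-1}(x)$.

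\emph{Transitivity.} I would take $(X,\mu), (X',\mu') \in P^{-1}(x)$ and form the isometry $i := \mu^{-1} \circ \mu' \colon H^2(X',\Z) \to H^2(X,\Z)$. Since both markings send $H^{2,0}$ to the same line $x$, $i$ is a Hodge isometry. Because $\overline{\mathcal{K}}_X$ is a fundamental chamber for $W_X$ on $\mathcal{C}_X$, and $-1$ swaps $\pm \mathcal{C}_X$, there exist a unique sign $\epsilon \in \{\pm 1\}$ and a unique $g \in W_X$ with $(\epsilon g) \circ i(\mathcal{K}_{X'}) = \mathcal{K}_X$. Theorem \ref{torelli-i} then produces an isomorphism $f \colon X \to X'$ satisfying $f^* = (\epsilon g) \circ i$. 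Setting $h := \mu \circ g \circ \mu^{-1} \in W_x$ and composing with $\mu$ on the left gives $\mu \circ f^* = (\epsilon h) \circ \mu'$, equivalently $((\epsilon h)^{-1} \circ \mu) \circ f^* = \mu'$. So $f$ is an isomorphism of marked K3 surfaces $(X, (\epsilon h)^{-1} \circ \mu) \to (X',\mu')$, and $(X',\mu')$ lies in the $G$-orbit of $(X,\mu)$.

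\emph{Freeness.} Suppose $\alpha \in G$ stabilizes $(X,\mu) \in \mathcal{M}$. Then there is an $f \in \mathrm{Aut}(X)$ with $\mu \circ f^* = \alpha \circ \mu$, hence $f^* = \mu^{-1} \alpha \mu \in \{\pm 1\} \times W_X$. But the split sequence \eqref{exact} exhibits $\mathrm{HodgeIsom}(X)$ as $(\{\pm 1\} \times W_X) \rtimes \mathrm{Aut}(X)$, so the image of $\mathrm{Aut}(X)$ under $f \mapsto f^*$ meets $\{\pm 1\} \times W_X$ only in the identity. Therefore $f^* = \mathrm{id}$ and $\alpha = 1$.

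The only nontrivial step is the bookkeeping in transitivity: tracking the direction of each isometry and conjugating correctly between $W_X \subset O(H^2(X,\Z))$ and $W_x \subset O(L_{K3})$ via the marking $\mu$. Everything else is a direct appeal to Theorem \ref{torelli-i}, Theorem \ref{torelli-ii}, or the structure of \eqref{exact}.
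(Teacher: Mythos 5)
Your proof is correct and is exactly the intended unwinding of the paper's one-line justification: the paper derives the corollary directly from surjectivity of the period map (Theorem \ref{torelli-ii}), the strong Torelli theorem (Theorem \ref{torelli-i}), and the fact that $\ocK_X$ is a fundamental chamber for $W_X$ on $\cC_X$, which is precisely how you establish non-emptiness, transitivity, and freeness. The conjugation bookkeeping identifying $W_X$ with $W_x$ via $\mu$ is handled correctly.
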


The moduli space $\mathcal{M}=\mathcal{M}_1\sqcup \mathcal{M}_2$
consists of two disjoint components, interchanged by
$\mu\mapsto -\mu$, because the monodromy 
of the universal family  does not permute
the decomposition of positive norm vectors in $H^{1,1}(X,\R)$
into components $\mathcal{C}_X\sqcup (-\mathcal{C}_X)$. 

\section{Quasipolarized moduli}\label{sec:qpol-moduli}

We now give analogous 
definitions to Section \ref{analyticK3}
in the
$\Lambda$-quasipolarized case. 
The standard reference here is
\cite{dolgachev1996mirror-symmetry}. 
However, Thm.~3.1 in \emph{ibid.}
is incorrect, see Remark~\ref{rem:dolgachev}. 
We give an alternative
definition of a lattice-polarized K3 surface for which
\cite[Thm.~3.1]{dolgachev1996mirror-symmetry} remains true.

Let $J\colon \Lambda\hookrightarrow L_{K3}$ be a fixed primitive, non-degenerate,
hyperbolic sublattice of signature $(1,r-1)$ with $r\leq 20$. 
The construction depends on this embedding 
and not just on the abstract lattice $\Lambda$.
We require the following notion:

\begin{definition}
A vector $h\in \Lambda\otimes \bR$ is {\it very irrational} if $h\notin
\Lambda'\otimes \bR$
for any proper,
saturated
sublattice $\Lambda'\subsetneq \Lambda$.
We will henceforth take $h\in \Lambda\otimes \bR$ 
to be a fixed very irrational vector of
positive norm $h^2>0$. 
\end{definition}

\begin{definition}\label{def:mqpol}
  A {\it $(\Lambda,h)$-quasipolarized K3 surface} 
  $(X,j)$ is a K3 surface
  $X$, and a primitive lattice embedding
  $j\colon \Lambda\hookrightarrow \NS(X)$ 
  for which $j(h)\in \ocK_X$ is big
  and nef. Two such $(X,j)$ and $(X',j')$ are {\it isomorphic} 
  if there is an isomorphism $f\colon 
  X\to X'$ of K3 surfaces for which
  $j=f^*\circ j'$.
\end{definition}

\begin{remark}\label{rem:difference-between-defs}
Consider the set of roots $\beta\in \Lambda$ and let
$W(\Lambda):=\langle r_\beta\rangle \subset O(\Lambda)$ 
be the corresponding reflection group. Then $h$ distinguishes 
a connected component $\cC_\Lambda$ of the positive
norm vectors, and an open cone 
$\cK_\Lambda\subset \mathcal{C}_\Lambda$
whose closure $\ocK_\Lambda$ is
a 
fundamental chamber for the action of 
$W(\Lambda)$ on $\cC_\Lambda$ that contains $h$.
The definition of a $\Lambda$-quasipolarization $j$ in \cite{dolgachev1996mirror-symmetry}
is that $j(\mathcal{K}_\Lambda)$ must contain
a big and nef class. This condition is implied by 
$j(h)$ being big and nef,
but is weaker.

Definition~\ref{def:mqpol} and Dolgachev's definition
agree when $\NS(X)=\Lambda$. In fact, Proposition
\ref{small-cone-prop} implies that the definitions agree on a finite
complement of Heegner divisors. But Remark \ref{rem:dolgachev}
gives an example in which the two definitions differ,
even for an ample (not just big and nef) quasipolarization.
\end{remark}

\begin{definition}
A {\it marking} of $(X,j)$ is an isometry
  $\mu:H^2(X,\Z)\to L_{K3}$ for which $J = \mu\circ j$.
\end{definition}

Since $j$ is determined by $\mu$ and $J$, 
it can be dropped from the
notation. As
$J\colon \Lambda\to L_{K3}$ is a
fixed embedding, we may identify 
$\Lambda\subset L_{K3}$ with a fixed
sublattice of the reference lattice 
$L_{K3}$. Then, ${\rm im}\,j$ is recovered
as $\mu^{-1}(\Lambda)$ and Definition
\ref{def:mqpol} says that
$\mu^{-1}(h)$ is big and nef.
Define the
$\Lambda$-quasipolarized period domain as
$$\mathbb{D}_\Lambda:=\mathbb{P}\{x\in \Lambda^\perp \otimes \C \,:\,x\cdot
x=0,\,x\cdot \overline{x}>0\}.$$ Since any element 
of the N\'eron--Severi
group $\NS(X)\subset H^2(X,\Z)$ is perpendicular 
to $H^{2,0}(X)$, we have that $\mathbb{D}_\Lambda$ 
contains the image of the period map on a
family of marked $\Lambda$-quasipolarized K3 surfaces
$(\mathcal{X}\to S ,\mu)$.

\begin{definition}
  Define the {\it Weyl group} of a point $x\in \mathbb{D}_\Lambda$ to be
  $$W_x(\Lambda^\perp):=\langle r_\beta\,:\, \beta \in x^\perp\cap
  \Lambda^\perp\rangle.$$
\end{definition}

Note that now $W_x(\Lambda^\perp)$ is finite because
$\Lambda^\perp\cap H^{1,1}(X,\bR)$ is negative-definite.

\begin{definition} Let $\underline{\mathcal{M}}_{\Lambda,h}$ 
be the moduli functor of marked 
$(\Lambda,h)$-quasipolarized K3 surfaces 
for a very irrational vector
$h\in\Lambda\otimes\bR$. 
\end{definition}

Theorem \ref{qpol-thm} shows that
even though the moduli functor 
$\underline{\mathcal{M}}_{\Lambda,h}$
depends on the very irrational vector 
$h$ chosen, the space representing it
doesn't. Even the universal families 
are the same for small deformations of $h$, 
by Remark \ref{big-on-small}.

\begin{theorem}\label{qpol-thm}
For any choice of a very irrational vector $h$, the moduli functor $\underline{\cM}_{\Lambda,h}$ is represented by a 
non-separated, smooth complex analytic space
$\cM_{\Lambda,h}$ of dimension $20-\rank\Lambda$, 
endowed with an \'etale, generically one-to-one period map 
$P_h\colon\mathcal{M}_{\Lambda,h}\to \bD_\Lambda$.
admitting a universal family 
$f_h\colon (\cX_h,\mu)\to \mathcal{M}_{\Lambda,h}$. 
For any $x\in\bD_\Lambda$, the fiber $P_h\inv(x)$ is a 
torsor over the finite group $W_x(\Lambda^\perp)$. 

For different very irrational vectors $h_1,h_2$, the moduli spaces $\cM_{\Lambda,h_1}$, $\cM_{\Lambda,h_2}$ are canonically isomorphic, so they can be identified with a fixed complex manifold,
which we denote $\cM_\Lambda$. 
\end{theorem}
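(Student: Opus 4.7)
The plan is to realize $\cM_{\Lambda,h}$ as an open complex submanifold of $\cM$, the moduli of marked K3 surfaces from Theorem~\ref{glue-def-smooth}. First form $\cM_\Lambda:=P^{-1}(\bD_\Lambda)\subset\cM$. Since $\bD_\Lambda\subset\bD$ is a smooth closed complex submanifold of codimension $\rank\Lambda$ (cut out by the linear conditions $x\cdot J(\lambda)=0$ for $\lambda\in\Lambda$) and $P$ is étale, $\cM_\Lambda$ is a non-Hausdorff complex submanifold of dimension $20-\rank\Lambda$ on which $P$ remains étale onto $\bD_\Lambda$. Then define $\cM_{\Lambda,h}\subset\cM_\Lambda$ as the subset where $\mu^{-1}(h)\in\ocK_X$ and take the universal family to be the restriction of the one on $\cM$.

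\textbf{Step 2: Openness, and the fiber of $P_h$ via very-irrationality.} Openness of $\cM_{\Lambda,h}$ in $\cM_\Lambda$ I would prove by specialization: if $s_n\to s_0$ in $\cM_\Lambda$ with $\mu_{s_0}^{-1}(h)\in\ocK_{X_{s_0}}$ but each $X_{s_n}$ admits a smooth rational $(-2)$-curve $C_n$ with $\mu_{s_n}([C_n])\cdot h<0$, local finiteness of effective $(-2)$-classes in a Kuranishi neighborhood lets one pass to a subsequence with $\mu_{s_n}([C_n])=\delta$ constant; then $\delta\cdot P(s_0)=0$ so $\delta\in\NS(X_{s_0})$, upper-semicontinuity of $h^0$ keeps $\delta$ effective on $X_{s_0}$, and $\delta\cdot h<0$ contradicts $\mu_{s_0}^{-1}(h)\in\ocK_{X_{s_0}}$. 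To describe $P_h^{-1}(x)$, fix a base point $(X,\mu_0)\in P^{-1}(x)$ in one component of $\cM$; by Corollary~\ref{cor:unpolarized-period-map} the other markings over $x$ are $(X,g\mu_0)$ for $g\in W_x$, and the condition $(g\mu_0)^{-1}(h)\in\ocK_X$ becomes $h\in g\cdot\mu_0(\ocK_X)$ in closure. Hence $P_h^{-1}(x)$ is a torsor over the stabilizer $(W_x)_h$. Very-irrationality forces every root $\beta\in x^\perp\cap L_{K3}$ with $\beta\cdot h=0$ to lie in $\Lambda^\perp$: decomposing $\beta=\beta_\Lambda+\beta_{\Lambda^\perp}$ under the rational splitting of $L_{K3}$ by $\Lambda$ and $\Lambda^\perp$, a nonzero $\beta_\Lambda$ would place $h$ in the proper saturated sublattice $\beta_\Lambda^\perp\cap\Lambda$. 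Hence $(W_x)_h=W_x(\Lambda^\perp)$; at a generic $x$ with $\NS(X)=\Lambda$ this group is trivial, yielding generic one-to-oneness.

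\textbf{Step 3: Independence of $h$.} The canonical identification $\cM_{\Lambda,h_1}\simeq\cM_{\Lambda,h_2}$ is the main obstacle, and I would carry it out in two moves. Within a single connected component of the very-irrational locus in $\cC_\Lambda$, a continuous path $h_t$ from $h_1$ to $h_2$ preserves the sign of $h_t\cdot\mu(\beta)_\Lambda$ for each root $\beta\in\NS(X)$ with $\beta\notin\Lambda^\perp$: a sign flip would require $h_t$ to lie on the hyperplane $\beta_\Lambda^\perp\cap(\Lambda\otimes\R)$, exiting the very-irrational locus. Since $h_t\cdot\mu(\beta)=0$ identically for $\beta\in\Lambda^\perp$, the condition $\mu^{-1}(h_t)\in\ocK_X$ is constant along the path, so $\cM_{\Lambda,h_1}=\cM_{\Lambda,h_2}$ as subsets of $\cM_\Lambda$. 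For $h_1,h_2$ in different components (the components are permuted by $\{\pm1\}\times W(\Lambda)$ acting on $\cC_\Lambda$), I would use the unique element $w\in\{\pm1\}\times W(\Lambda)$ carrying the component of $h_1$ to that of $h_2$---unique because the stabilizer of a very-irrational vector is trivial by the argument already used in Step~2---extend it to $\tilde w\in O(L_{K3})$ by the identity on $\Lambda^\perp$, and let $\Phi(X,\mu):=(X,\tilde w\mu)$. Then $(\tilde w\mu)^{-1}(\tilde w h_1)=\mu^{-1}(h_1)\in\ocK_X$, so $\Phi$ lands in $\cM_{\Lambda,\tilde w h_1}$, which by the first move coincides with $\cM_{\Lambda,h_2}$. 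This yields the common manifold $\cM_\Lambda$ independent of $h$.
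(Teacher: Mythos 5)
Steps~1 and~2 are essentially the paper's construction (restrict the fine moduli space $\cM$ to $P^{-1}(\bD_\Lambda)$, impose $\mu^{-1}(h)\in\ocK_X$, and identify the stabilizer of $h$ in $W_x$ with $W_x(\Lambda^\perp)$ via very-irrationality), but Step~3 contains a genuine gap, and it is exactly at the point this theorem exists to repair. First, the connected components of the very-irrational locus in $\cC_\Lambda$ are merely the open rays $\bR_{>0}h$: the corank-one saturated sublattices $\Lambda'\subsetneq\Lambda$ give a \emph{dense} family of rational hyperplanes $\Lambda'\otimes\bR$, so any two non-proportional positive vectors are separated by one of them. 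Your path argument therefore only shows invariance of the condition $\mu^{-1}(h)\in\ocK_X$ under rescaling $h$. (The loci on which $\cM_{\Lambda,h}\subset\cM$ is actually constant are the small cones of Definition~\ref{small-cone}, cut out only by $\beta_\Lambda^\perp$ for roots $\beta$ with $\langle\Lambda,\beta\rangle$ hyperbolic; see Propositions~\ref{cone-exists} and \ref{same-functor}.) Second, and fatally, $\{\pm1\}\times W(\Lambda)$ does \emph{not} act transitively on these components, nor on the small cones: Example~\ref{exa:small-cones2} exhibits a rank-two $\Lambda$ with no roots at all, so $W(\Lambda)=1$, yet with infinitely many small cones, and Corollary~\ref{cor:many-cones} shows this is typical. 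So the element $w$ you invoke does not exist in general --- this is precisely the oversight in \cite{dolgachev1996mirror-symmetry} described in Remark~\ref{rem:dolgachev}. (Even when such a $w$ exists, $w\oplus{\rm id}_{\Lambda^\perp}$ need not extend to an isometry of $L_{K3}$.) The paper's identification is instead fiberwise over $\bD_\Lambda$: both $P_{h_1}^{-1}(x)$ and $P_{h_2}^{-1}(x)$ are canonically identified with the set of Weyl chambers of the finite group $W_x(\Lambda^\perp)$ acting on the negative-definite space $x^\perp\cap(\Lambda^\perp\otimes\bR)$, by recording which roots in $\Lambda^\perp$ are effective on the surface; matching chambers gives the canonical biholomorphism.

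A secondary issue: in Step~2, ``local finiteness of effective $(-2)$-classes in a Kuranishi neighborhood'' is false as stated --- a single K3 surface can already carry infinitely many irreducible $(-2)$-curves (any $X$ with infinite automorphism group and at least one $(-2)$-curve). To extract a constant subsequence you must restrict to the classes that actually violate nefness, i.e.\ roots $\delta$ with $\delta\cdot h<0$ and $\delta\cdot x_{s_n}=0$, and bound their pairing against a continuously varying K\"ahler class; finiteness then follows because $\langle x,h\rangle^\perp$ is negative definite uniformly for $x$ near $x_0$. Since the paper itself asserts openness of $\cM_{\Lambda,h}$ in $P^{-1}(\bD_\Lambda)$ without proof, this is a fixable gap rather than a structural one, but the finiteness claim needs the qualification.
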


The proof is a modification of that of
\cite[Thm. 3.1]{dolgachev1996mirror-symmetry}.

\begin{proof}
  First, consider the unpolarized period map for marked analytic K3
  surfaces $$P:\mathcal{M}\to \mathbb{D}$$ and take the sublocus $P^{-1}(\mathbb{D}_\Lambda)$. This is identified with the moduli space of marked K3 surfaces $(X,\mu)$ for which $\mu^{-1}(\Lambda)\subset {\rm Pic}(X)$. Let $x\in \mathbb{D}_\Lambda$ and choose $(X,\mu)\in P^{-1}(x)$ arbitrarily. The action of
  $g\in \{\pm 1\}\times W_x$ on the fiber is
  $(X,\mu)\mapsto (X,g\circ \mu).$ Since $\mu(\ocK_X)$ is a
  fundamental domain for the action of $\{\pm 1\}\times W_x$ on the positive norm vectors in $x^\perp\subset L_{K3}\otimes \bR$, there exists some $g$ for which $h\in (g\circ \mu)(\ocK_X)$.

  So we may as well have assumed $\mu$ itself satisfies
  $h\in \mu(\ocK_X)$. Next, we claim that the set of $g\in W_x$ for
  which $h\in (g\circ \mu)(\ocK_X)$ is equal to
  $W_x(\Lambda^\perp)\subset W_x$. Certainly any element $g\in W_x(\Lambda^\perp)$
  preserves this condition, because $g\vert_\Lambda={\rm id}_\Lambda$ and so
  $g(h)=h$. Conversely, since $W_x$ is a reflection group, any such
  element $g$ is generated by reflections in roots $\beta\in
  h^\perp$. 
  But the integral classes in $h^\perp$ are exactly the
  integral classes in $\Lambda^\perp$ because $h$ is very irrational. (This is the point of choosing a very irrational vector $h$.)
  So one has $g\in W_x(\Lambda^\perp)$.

    Using the \'etale map of Theorem~\ref{glue-def-smooth}, we identify an open subset $U_{x,h} \subset \cM_{\Lambda,h}$ with an open subset of $P\inv(\bD_\Lambda)$ in $\cM$. Gluing these neighborhoods for all $x\in\bD_\Lambda$, we build the fine moduli space $\cM_{\Lambda,h}$ as an open subset of $P\inv(\bD_\Lambda)$ together with a surjective period map to $\bD_\Lambda$.

For a vector $\beta\in L_{K3}$ one has $\beta\cdot x=0$ 
for all $x\in \bD_\Lambda$ if and only if $\beta\in\Lambda$.
Thus, for every root $\beta\in\Lambda^\perp$ the zero set 
$\beta^\perp\subset\bD_\Lambda$ is a proper closed subset. 
This implies that for any $x\in\bD_\Lambda$ 
outside of a 
locally finite countable union
of hyperplanes $\cup \beta^\perp$ with $\beta$ going over 
roots in $\Lambda^\perp$, one has $W_x(\Lambda^\perp)=1$. 
So, $P$ is generically one-to-one.

Let $h_1,h_2$ be two very irrational vectors. 
A point $y_1\in P_{h_1}\inv(x)$ corresponds to a
K3 surface $Y_1$ whose period is $x$. 
It defines a fundamental chamber $\fC_{y_1}$
in the vector space $x^\perp\cap(\Lambda^\perp\otimes~\bR)$ 
for the reflection group $W_x(\Lambda^\perp)$: 
the positive chamber for 
the positive roots on $Y_1$ which lie in $\Lambda^\perp$.
The choice of such a $W_x(\Lambda^\perp)$-chamber realizes the 
$W_x(\Lambda^\perp)$-torsor structure on $P_{h_1}\inv(x)$.

The same is true for the points $y_2\in P_{h_2}\inv(x)$.
Thus there is a canonical isomorphism of
the torsors $P_{h_1}\inv(x)\to P_{h_2}\inv(x)$
defined by sending $y_1\mapsto y_2$ if and only if 
$\fC_{y_1}=\fC_{y_2}$. This defines a canonical 
biholomorphism $\cM_{\Lambda,h_1}\to \cM_{\Lambda, h_2}$ 
and we deduce the independence of 
$\cM_{\Lambda,h}$ on~$h$ as a 
complex manifold.
\end{proof}

So $\underline{\mathcal{M}}_{\Lambda,h}$ is represented by a
submanifold $\mathcal{M}_{\Lambda,h}\subset \mathcal{M}$ of the space of
all marked K3 surfaces, and $\mathcal{M}_{\Lambda,h}\simeq \mathcal{M}_\Lambda$
for all very irrational $h$. Thus, 
we may make identifications
$\underline{\mathcal{M}}_{\Lambda,h}=\mathcal{M}_{\Lambda,h}=\mathcal{M}_\Lambda$
unless we need to either to distinguish the functors
$\underline{\mathcal{M}}_{\Lambda,h}$ for different~$h$, or distinguish the subloci
$\mathcal{M}_{\Lambda,h}\subset \mathcal{M}$.

\begin{corollary}\label{mpolcor} 
Let $\cM_{\Lambda,h}^\circ\subset \cM_{\Lambda,h}$ denote
the moduli space of marked $(\Lambda,h)$-quasipolarized
K3 surfaces $(X,\mu)$ for which $\mu(h)$ is ample. 
Then $P$ restricts to an isomorphism from 
$\cM_{\Lambda,h}^\circ$ to the complement of Heegner
divisors $$\bD_\Lambda^\circ:=
\{x\in \bD_\Lambda\,:\,x\cdot \beta\neq 0 
\textrm{ for all roots }\beta\in \Lambda^\perp\}.$$
\end{corollary}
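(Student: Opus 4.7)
The plan is to combine Theorem~\ref{qpol-thm} with a lattice-theoretic characterization of ampleness. Theorem~\ref{qpol-thm} already gives that $P$ is \'etale, with each fiber $P\inv(x)$ a torsor under $W_x(\Lambda^\perp)$. By the defining condition of $\bD_\Lambda^\circ$, there are no roots in $x^\perp\cap\Lambda^\perp$ for $x\in\bD_\Lambda^\circ$, so $W_x(\Lambda^\perp)=1$ there, and the restriction of $P$ over $\bD_\Lambda^\circ$ is \'etale with singleton fibers. An \'etale bijection of complex manifolds is a biholomorphism, so the only thing left to verify is that $\cM_{\Lambda,h}^\circ$ is precisely $P\inv(\bD_\Lambda^\circ)$.

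To establish this equality, I recall that for a marked $(\Lambda,h)$-quasipolarized K3 surface $(X,\mu)$, the class $\mu(h)\in\ocK_X$ is big and nef with $\mu(h)^2>0$. Standard K3 theory then gives that $\mu(h)$ is ample iff $\mu(h)\cdot\gamma\neq 0$ for every root $\gamma\in\NS(X)$: exactly one of $\pm\gamma$ is effective by Riemann--Roch on $X$, and the walls of the nef cone $\ocK_X$ are the hyperplanes $\gamma^\perp$ with $\gamma$ an effective $(-2)$-class. Next I would invoke very irrationality. Since $h\in\Lambda\otimes\bR$ is orthogonal to $\Lambda^\perp$, for any $\alpha\in L_{K3}$ one has $\alpha\cdot h=\alpha_\Lambda\cdot h$, where $\alpha_\Lambda\in \Lambda\otimes\Q$ is the rational orthogonal projection. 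If $\alpha_\Lambda$ were nonzero, its orthogonal complement in $\Lambda$ would be a proper saturated sublattice containing $h$, contradicting very irrationality. Hence $\alpha\cdot h=0$ iff $\alpha\in\Lambda^\perp\cap L_{K3}$.

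Applying this identification with $\alpha=\mu(\gamma)$ for a root $\gamma\in\NS(X)$, and noting that $\gamma\in\NS(X)$ translates exactly to $\mu(\gamma)\in x^\perp\cap L_{K3}$ with $x=P(X,\mu)$, we get that $\mu(h)\cdot\gamma=0$ iff $\mu(\gamma)\in x^\perp\cap\Lambda^\perp\cap L_{K3}$ is a root. Thus $\mu(h)$ is ample iff no such root exists, iff $x\in\bD_\Lambda^\circ$, proving $\cM_{\Lambda,h}^\circ=P\inv(\bD_\Lambda^\circ)$ and completing the argument via the first paragraph. The main subtlety, and the only place where the very irrationality of $h$ is essential, is the step $\{\alpha\in L_{K3}\colon\alpha\cdot h=0\}=\Lambda^\perp\cap L_{K3}$; without this, roots in $h^\perp\cap L_{K3}$ not lying in $\Lambda^\perp$ could obstruct $\mu(h)$ from being ample even when $x\in\bD_\Lambda^\circ$, which is exactly the phenomenon behind the discrepancy with \cite{dolgachev1996mirror-symmetry} noted in Remark~\ref{rem:difference-between-defs}.
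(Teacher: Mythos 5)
Your proposal is correct and follows essentially the same route as the paper's proof: reduce to Theorem~\ref{qpol-thm} via the $W_x(\Lambda^\perp)$-torsor structure of the fibers, and use very irrationality of $h$ to show that any root $\gamma\in\NS(X)$ with $\mu(h)\cdot\gamma=0$ must satisfy $\mu(\gamma)\in\Lambda^\perp$. You simply make explicit two steps the paper leaves implicit, namely the orthogonal-projection argument for the very-irrationality step and the wall description of the ample cone.
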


The corollary is a modification of
\cite[Cor.~3.2]{dolgachev1996mirror-symmetry}.

\begin{proof} 
By definition, $\mu(h)$ is big and nef.
It is ample if and only if there does not exist a root
$\beta\in \NS(X)$ with $\beta\cdot \mu(h)=0$
Since $h$ is very irrational, we would necessarily have $\beta\in\Lambda^\perp$. 
Thus, $\mu(h)$ is ample exactly when
$W_x(\Lambda^\perp)=1$, and the corollary follows from Theorem \ref{qpol-thm}. \end{proof}

\begin{definition}\label{small-cone}
The {\it small cones} in $\cC_\Lambda\cup (-\cC_\Lambda)$,
which we will denote by $\sigma$,
are the connected components of the complements of the perpendiculars of every
vector $\beta\in L_{K3}\setminus \Lambda^\perp$ of square $\beta^2=-2$ for which
$\langle \Lambda,\beta\rangle\subset L_{K3}$ 
is hyperbolic, i.e.~it is nondegenerate 
of signature $(1,s)$ for some $s\in \bZ_{\geq 0}$.
\end{definition}

\begin{remark}
Definition \ref{small-cone}
is the key definition of this paper.
In \cite{dolgachev1996mirror-symmetry}, 
a cone $\cK_\Lambda$ is 
defined as an open Weyl chamber for the 
Weyl group $W(\Lambda)$ and a
``$\Lambda$-polarized quasi ample" K3 surface
is defined as a pair $(X,j)$ with an embedding
$j\colon \Lambda\hookrightarrow {\rm NS}(X)$ 
for which $j(\cK_\Lambda)$ contains 
a big and nef class.
This is the 
main source of trouble, since the walls of the 
K\"ahler cone $\cK_X$ are determined by all roots 
$\beta\in L_{K3}$, not just by those that 
lie in~$\Lambda$.
The cones $\cK_X$ are too 
``large'', in that there may be
too many isometries
$j\colon \Lambda \to  {\rm NS}(X)$ for which $j(\cK_\Lambda)$ contains a big and nef class,
leading in some cases to an infinite period
fiber, see Remark \ref{rem:dolgachev}.

Replacing $\cK_\Lambda$ 
with a small cone obviates the issue, 
doing what the choices of chambers were intended 
for in \cite{dolgachev1996mirror-symmetry};
see Proposition \ref{cone-exists} 
and Theorem \ref{qpol-thm}.
\end{remark}

\begin{remark}
    The fact that the definition of a lattice-polarized K3 surface is somewhat delicate and requires care was known to experts for a long time. For example, it was one reason for introducing the notion of a ``good embedding" in \cite[Section 4.2]{looijenga1984smoothing-components}.
    The issues with the original definition have been bypassed in different ways by some authors, for instance in \cite{joumaah2016non-symplectic} and \cite{camere2016lattice-polarized, camere2018some-remarks}, by introducing the notion of $\cK_\Lambda$-generality, employed e.g.~in \cite{boissiere2019complex-ball, 
brandhorst2023prime-order, 
bassi2025stable-cubic}.
\end{remark}

For the following examples, we note that any even 
lattice of rank $\le3$ admits a primitive embedding 
into $L_{K3}$, and if rank $\le2$ then this embedding 
is unique modulo $O(L_{K3})$ by a theorem of James \cite{james1972representations}, cf. \cite[Thm.~2.4]{looijenga1980torelli}.

\begin{example}\label{exa:small-cones1}
  Let $\Lambda=U(n)=\langle e,f\rangle$ with 
  $e^2=f^2=0$ and $e\cdot f=n>0$.
  Consider a lattice $\langle \Lambda, \beta\rangle$
  such that $\beta^2=-2$, $\beta\cdot e=a$,  
  $\beta\cdot f=-b$.  The hyperplane
  $\beta^\perp$ intersects the union $\cC_\Lambda\cup (-\mathcal{C}_\Lambda)$ of the
  first and third quadrants of $\Lambda$ iff $ab>0$.
If $\beta\not\in\Lambda$,
 this lattice is hyperbolic if and only if $ab<n$. 
The case $\beta\in\Lambda$ is possible only for $n=1$, then $ab=1$. 
  We see that in the case $\Lambda=U$ 
  (corresponding to elliptic
  K3 surfaces with a section) the small cone decomposition coincides
  with the Weyl chamber decomposition for $W(\Lambda)$. However, they are different for $n\ge2$. 
  In this case, there are only finitely many small cones.
  For $n\ge2$ the lattice $\Lambda$ has no roots, so $W(\Lambda)=1$ and $\cK_\Lambda=\cC_\Lambda$.
\end{example}

\begin{example}\label{exa:small-cones2}
  Similarly, let $\Lambda=\la e_1,e_2\ra$ be an even hyperbolic lattice with the quadratic form $q(x,y)=2n(x^2-dy^2)$, with $n,d\in\bN$ and non-square~$d$. 
   Consider a lattice $\langle \Lambda, \beta\rangle$
  such that $\beta^2=-2$, $\beta\cdot e_1=a$ and $\beta\cdot e_2=b$.
  The hyperplane $\beta^\perp$ intersects $\cC_\Lambda\cup (-\cC_\Lambda)$ iff $b^2-da^2>0$. If $\beta\not\in\Lambda$ then $\la\Lambda, \beta\ra$ is hyperbolic iff $b^2-da^2<4dn$. 
  The case $\beta\in\Lambda$ is possible only for $n=1$, in which case $b^2-da^2=4d$. 
  In all cases, 
  the infinitely many solutions of Pell's equations $b^2-da^2=k$ with $0<k<4dn$ give infinitely many walls of small cones. So, in this case there are infinitely many small cones. On the other hand, for $n\ge2$ the lattice $\Lambda$ has no roots, so $W(\Lambda)=1$ and $\cK_\Lambda=\cC_\Lambda$.

  The lattice vectors $\lambda\in\Lambda$ are in bijection with the algebraic integers $x+y\sqrt{d}\in\bZ[\sqrt{d}]$, and multiplying by a number with norm $N(x+y\sqrt{d})=x^2-dy^2=1$ is an isometry of $\Lambda$. This implies that modulo the isometry group $O(\Lambda)$ there are only finitely many small cones.
\end{example}

\begin{proposition}\label{small-cone-prop}
The small cone decomposition is locally rational polyhedral.
Let $G_\Lambda\subset O(L_{\rm K3})$
be the group of isometries of $L_{\rm K3}$ that preserve $\Lambda$.
The number of $G_\Lambda$-orbits of small cones is finite.
\end{proposition}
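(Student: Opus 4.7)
My strategy is to reformulate both claims in terms of the orthogonal projection $L_{K3}\to\Lambda^*$, $\beta\mapsto\beta_\Lambda$, where $\Lambda^*:=\mathrm{Hom}(\Lambda,\bZ)\subset\Lambda\otimes\bQ$ is the dual lattice. Each wall $\beta^\perp\cap(\Lambda\otimes\bR)$ depends only on $\beta_\Lambda$, and unpacking the conditions $\beta^2=-2$, hyperbolicity of $\langle\Lambda,\beta\rangle$ (equivalent to $\beta_{\Lambda^\perp}^2\le 0$), and $\beta_\Lambda^\perp$ meeting the interior of $\cC_\Lambda$ shows that walls correspond to $\beta_\Lambda\in\Lambda^*$ with $\beta_\Lambda^2\in[-2,0)$. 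In particular, each wall is a rational hyperplane.

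\textbf{Local finiteness.} To establish local polyhedrality at an interior point $x_0\in\cC_\Lambda$, I would decompose $\beta_\Lambda=ax_0+w$ with $w$ in the negative-definite subspace $x_0^\perp\subset\Lambda\otimes\bR$. The requirement that $\beta_\Lambda^\perp$ meet a bounded neighborhood of $x_0$ bounds $|a|$; combined with the identity $\beta_\Lambda^2=a^2x_0^2+w^2\in[-2,0)$, this also bounds $w^2$ and hence $|w|$. Thus $\beta_\Lambda$ lies in a bounded subset of $\Lambda^*\otimes\bR$, which meets the discrete set $\Lambda^*$ in finitely many points.

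\textbf{Finiteness of $G_\Lambda$-orbits.} The walls are parameterized, up to sign, by $\beta_\Lambda\in\Lambda^*$ with squared norm in the finite set $[-2,0)\cap\tfrac{1}{d}\bZ$, where $d=|\det\Lambda|$. I would invoke classical reduction theory for indefinite quadratic forms---Eichler--Kneser for rank $\ge 3$, Pell-equation analysis for rank $2$, triviality for rank $1$---to obtain finitely many $O(\Lambda)$-orbits of vectors of any fixed square in $\Lambda^*$. Since Nikulin's theory of discriminant forms ensures that $G_\Lambda\to O(\Lambda)$ has finite-index image, the walls form finitely many $G_\Lambda$-orbits. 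To pass from walls to chambers, I would fix a finite-volume fundamental domain $F\subset\cC_\Lambda/\bR_{>0}$ for $G_\Lambda$ (Borel--Harish-Chandra); local finiteness immediately handles any compact part of $F$.

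\textbf{Main obstacle.} The delicate step is the cusp analysis at infinity. Each cusp of $F$ corresponds to a rational isotropic line $\ell\subset\Lambda\otimes\bQ$, and a priori small cones could accumulate into $\ell$. I would exploit the strict lower bound $\beta_\Lambda^2\ge-2$ together with the cocompactness of the cusp stabilizer's action on horospherical neighborhoods: the former prevents walls from clustering arbitrarily close to the isotropic direction, and the latter reduces the remaining walls near the cusp to finitely many orbits under the stabilizer. Combining the compact-part count with the cusp-by-cusp count gives finitely many $G_\Lambda$-orbits of small cones. Pinning down this cusp analysis in full generality (especially when $\Lambda^\perp$ contributes its own arithmetic subtleties via $G_\Lambda$) is where I expect the bulk of the technical work to lie.
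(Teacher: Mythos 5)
Your plan is essentially the paper's own proof. The reformulation of walls as $\beta_\Lambda^\perp$ for $\beta_\Lambda\in\Lambda^*$ with $\beta_\Lambda^2\in[-2,0)$, the local-finiteness argument via boundedness of $\beta_\Lambda$ near an interior point, and the passage to chambers via a finite-sided, finite-volume polyhedral fundamental domain $\cP$ for $G_\Lambda$ (Bowditch, Borel--Harish-Chandra) are all exactly what the paper does. What you flag as the ``main obstacle'' --- the cusp analysis --- is a step the paper simply asserts (``the set of hyperplanes $\beta^\perp$ cuts $\overline{\cP}$ into finitely many cones''), and your sketch of it is the correct one: for a sufficiently small horoball at a cusp, the bound $\beta_\Lambda^2\geq -2$ together with discreteness of $\beta_\Lambda\cdot e$ forces any wall entering the horoball to contain the isotropic direction $e$, and the cusp stabilizer acts on those walls with finitely many orbits. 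So you are filling in a detail the paper elides, not fighting a difficulty it circumvents.

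The one genuine gap in the plan as written is the case $\rank\Lambda=2$ with $\Lambda$ isotropic over $\bQ$: there $O(\Lambda)$ is finite, hence not a lattice in $O(1,1)$, Borel--Harish-Chandra does not apply, and there is no finite-volume fundamental domain, so your chamber-counting step never launches. The paper explicitly carves out this case and treats it separately. It is in fact the easiest case: writing $\beta_\Lambda$ in a rational isotropic basis $e,f$, the condition $\beta_\Lambda^2=c$ for a fixed negative $c$ becomes a condition $pq=\mathrm{const}$ on integers, which has finitely many solutions; so there are finitely many walls outright and the conclusion is immediate. Add that case split and your argument is complete and matches the paper's.
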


\begin{proof}
First, we show that the walls of the small cone 
decomposition are locally finite in $\cC_\Lambda\cup (-\cC_\Lambda)$. 
Write $\beta = \beta_\Lambda+\beta_{\Lambda^\perp}$ with
$\beta_\Lambda\in \Lambda\otimes \Q$ and $\beta_{\Lambda^\perp}\in \Lambda^\perp\otimes \Q$.
Note that $\beta_{\Lambda^\perp}$ lies in the hyperbolic space
$\langle \Lambda, \beta\rangle \otimes \Q$ and is perpendicular to $\Lambda$,
so we necessarily
have $(\beta_{\Lambda^\perp})^2\leq 0$. Thus, $(\beta_\Lambda)^2\geq -2$.

Next, observe that $\beta_\Lambda\in \Lambda^*$ lies in the
dual lattice, because intersecting with $\beta$
defines an integral linear functional on $\Lambda$. 
The hyperplane $(\beta_\Lambda)^\perp$ intersects the positive cone $\cC_\Lambda$ only if $\beta_\Lambda^2<0$. Since $\Lambda$ has a finite index in $\Lambda^*$, for the $\beta_\Lambda$ defining a wall, there are only finitely many possible values that $(\beta_\Lambda)^2$ can take, and the set of such vectors in $\Lambda^*$ is discrete.
The first statement of the proposition follows.

Now, observe that $G_\Lambda$ is a finite index subgroup
of $O(\Lambda)$. 
Let $\cP$  be a polyhedral 
fundamental domain with finitely 
many sides for the action of $G_\Lambda$ 
on the projectivization
of the positive
norm vectors $
\bH^{\rank\Lambda-1}\simeq 
\bP(\cC_\Lambda\cup (-\cC_\Lambda))=
\bP \cC_\Lambda$. 
It exists by 
\cite[Prop.~5.6]{bowditch1993geometrical-finiteness}.  

If $\rank \Lambda \ge3$ or $\Lambda$ is anisotropic
then this polyhedron $\cP$ has 
finite volume by a theorem of Borel and Harish-Chandra 
\cite[7.8, 9.4]{borel1962arithmetic-subgroups}. Adding finitely 
many cusps gives a compact space $\overline{\cP}$. 
The 
set of hyperplanes $\beta^\perp$ cuts it 
into finitely many cones.
This implies the second 
statement of the proposition.

In the case of $\rank\Lambda=2$ the cone $\cC_\Lambda$ is 
isomorphic to a quadrant in $\bR^2$ that is possibly 
divided into infinitely many small cones accumulating 
to the boundary. It is well known and easy to prove 
that the set of $O(\Lambda)$-orbits of vectors of a 
given norm in $\Lambda$ is finite. This implies that 
the set of the $O(\Lambda)$-orbits of the rays 
of these cones is finite, so there are only 
finitely many $O(\Lambda)$-orbits of the small cones.
\end{proof}

\begin{corollary}\label{cor:many-cones}
For any hyperbolic lattice $\Lambda$, if there is more than one small cone then there are infinitely many, unless $\Lambda$ is isotropic of rank~$2$.
\end{corollary}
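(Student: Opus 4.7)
The plan is to reduce the problem to an orbit-infinitude statement for wall normals and then treat the different ranks of $\Lambda$ separately. By Proposition~\ref{small-cone-prop}, the walls form a locally finite arrangement in the convex open cone $\cC_\Lambda$, so every new wall subdivides at least one existing chamber and therefore infinitely many walls yield infinitely many small cones. Since $G_\Lambda$ permutes the walls and its image in $O(\Lambda)$ has finite index (by the proof of Proposition~\ref{small-cone-prop}), it suffices to exhibit a single infinite $O(\Lambda)$-orbit on the set of wall-normal lines $\bR\beta_\Lambda \subset \Lambda\otimes\bR$. The $O(\Lambda)$-stabilizer of such a line is contained in $\{\pm 1\}\times O(\beta_\Lambda^\perp\cap\Lambda)$, so I need to show that this stabilizer has infinite index in $O(\Lambda)$.

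The small-rank cases are easy. If $\rank\Lambda\le 1$ there are no walls at all, so the hypothesis fails vacuously. If $\rank\Lambda=2$ and $\Lambda$ is anisotropic, Dirichlet's unit theorem (equivalently, infinitely many solutions of Pell's equation in this setting, as in Example~\ref{exa:small-cones2}) provides infinitely many hyperbolic units in $O(\Lambda)$, while the stabilizer of $\bR\beta_\Lambda$ has order at most $4$ because $\beta_\Lambda^\perp\cap\Lambda$ has rank $1$; hence the orbit is infinite. The excluded case $\Lambda\cong U(n)$ of rank $2$ isotropic is precisely where this argument breaks down: $O(\Lambda)$ is finite (of order $8$), consistent with the finitely-many-chambers phenomenon of Example~\ref{exa:small-cones1}.

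The main case is $\rank\Lambda=r\ge 3$. By the Borel--Harish-Chandra theorem, $O(\Lambda)$ is an arithmetic lattice of finite covolume in the real algebraic group $O(\Lambda\otimes\bR)\cong O(r-1,1)$, hence infinite (the symmetric space $\bH^{r-1}$ has infinite volume for $r\ge 3$). To upgrade this to orbit infinitude, observe that the $O(\Lambda\otimes\bR)$-stabilizer of $\bR\beta_\Lambda$ is the algebraic subgroup $O(\beta_\Lambda^\perp)\times\{\pm 1\}$, of codimension $r-1\ge 2$ in $O(\Lambda\otimes\bR)$. By the Borel density theorem, $O(\Lambda)$ is Zariski-dense in the identity component of $O(\Lambda\otimes\bR)$, and its intersection with a proper algebraic subgroup must therefore have infinite index. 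The $O(\Lambda)$-orbit of $\bR\beta_\Lambda$ is thus infinite, completing the reduction.

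The main obstacle is this orbit-infinitude step for $r\ge 3$; Borel density gives the cleanest route, but a more elementary alternative is to combine Hermite--Minkowski finiteness of $O(\Lambda)$-orbits on lattice vectors of a fixed norm with the classical fact that an indefinite integral form of rank $\ge 3$ represents every nonzero integer either not at all or in infinitely many ways (the level set in $\Lambda^*$ is a discrete infinite subset of a noncompact hyperboloid).
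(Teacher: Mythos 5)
Your proof is correct and follows essentially the same route as the paper's: both reduce the claim to the infinitude of the $G_\Lambda$-orbit of a single wall, using the finite-covolume/arithmeticity input (Borel--Harish-Chandra) already established in the proof of Proposition~\ref{small-cone-prop}. Your justification of the orbit-infinitude step (Borel density for $\rank\Lambda\ge 3$, Pell units against an order-$\le 4$ stabilizer for rank $2$ anisotropic) actually fills in a point the paper's proof asserts without argument, so it is a welcome sharpening; the only slips are in asides about the excluded case ($O(U(n))$ has order $4$, not $8$, and not every isotropic even rank-$2$ lattice is of the form $U(n)$), which do not affect the argument.
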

\begin{proof}
By the proof of Proposition~\ref{small-cone-prop}, under these assumptions there is finite volume polyhedral fundamental domain $\cP$ for the action of $G_\Lambda$. If there is one wall $\beta^\perp$ separating two small cones then it intersects some polyhedron $g(\cP)$ in the $G_\Lambda$-orbit of $\cP$. 
Then there are infinitely many walls $g'(\beta^\perp)$ 
for $g'\in G_\Lambda$, so there are infinitely many small cones.
\end{proof}

On the other hand, if $\Lambda$ has no roots, 
then there is only one $W(\Lambda)$-Weyl chamber 
$\cK_\Lambda=\cC_\Lambda$. So, for many lattices 
$\Lambda$, a single Weyl chamber subdivides 
into infinitely many small cones.
On the opposite side of the spectrum, we have:

\begin{lemma}
    Let $\Lambda$ be  
    unimodular, 
    i.e.~isomorphic to $U$, $U\oplus E_8$ or 
    $U\oplus E_8^2$. Then the small cones 
    coincide with the Weyl chambers.
\end{lemma}
\begin{proof}
    Since $\Lambda$ is unimodular, it splits off as a direct summand: $\langle\Lambda,\beta\rangle = \Lambda\oplus B$. As in the proof of Proposition~\ref{small-cone-prop}, $\beta^\perp$ intersects the positive cone $\cC_\Lambda$ only if $(\beta_{\Lambda})^2<0$. Since $B$ is negative definite, one has $(\beta_{\Lambda^\perp})^2\le 0$. The equality $(\beta_{\Lambda^\perp})^2 + (\beta_{\Lambda})^2 = -2$ leaves the only possibility $\beta_\Lambda^2=-2$ and $(\beta_{\Lambda^\perp})^2= 0$, which implies $\beta_{\Lambda^\perp}=0$. So $\beta\in\Lambda$ and $\beta^\perp$ is a wall of a Weyl chamber.
\end{proof}


\begin{proposition}\label{cone-exists}
Let $X$ be a K3 surface, and
let $j:\Lambda\hookrightarrow {\rm Pic}(X)$ be an embedding. Then the following are equivalent:
\begin{enumerate} 
\item $j(h)\in \ocK_X$ i.e.~$j$ 
defines an $(\Lambda,h)$-quasipolarization.
\item $j(\sigma)\subset \ocK_X$ for the 
unique small cone $\sigma$ containing $h$.
\item\label{three} For any $h'\in\sigma$ 
in the same small cone
as $h$, $j(h')\in \ocK_X$.
\end{enumerate}
\end{proposition}

Observe that (\ref{three}) 
holds, whether or not $h'$ is very irrational.

\begin{proof}
The proposition follows from the statement 
that any wall of $\ocK_X$ is of the form 
$\beta^\perp$ for a root $\beta\in H^2(X,\bZ)$
and that the lattice 
$\langle j(\Lambda),\beta\rangle 
\subset \NS(X)\subset H^{1,1}(X,\bR)$
has hyperbolic signature for any root $\beta$. 
The walls of $\ocK_X$ associated
to roots $\beta\in j(\Lambda)^\perp$ 
do not decompose $\Lambda$.
\end{proof}

\begin{proposition}\label{same-functor}
  For two very irrational vectors $h_1,h_2$, the universal families $f_{h_1}\colon (\cX_{h_1},\mu)\to \mathcal{M}_{\Lambda}$ and
  $f_{h_2}\colon (\cX_{h_2},\mu)\to \mathcal{M}_{\Lambda}$  
  of marked K3 surfaces
  are isomorphic if and only if $h_1,h_2$ lie in the same small cone $\sigma$.  
\end{proposition}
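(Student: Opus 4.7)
The plan is to read the statement through the fine moduli space $\cM$ of marked analytic K3 surfaces from Theorem~\ref{glue-def-smooth}. Since marked K3 surfaces admit only the trivial automorphism (Theorem~\ref{torelli-i}), two families of marked K3 surfaces over $\cM_\Lambda$ are isomorphic if and only if they have identical fibers, as points of $\cM$, over each base point. Composing the canonical identifications $\cM_\Lambda\xrightarrow{\sim}\cM_{\Lambda,h_i}$ of Theorem~\ref{qpol-thm} with the inclusions $\cM_{\Lambda,h_i}\hookrightarrow\cM$, one obtains two classifying maps $\iota_1,\iota_2\colon\cM_\Lambda\to\cM$, and the proposition reduces to the statement that $\iota_1=\iota_2$ if and only if $h_1,h_2$ lie in a common small cone.

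For the easy direction, suppose $h_1,h_2\in\sigma$. Proposition~\ref{cone-exists} says that, for any marked K3 surface $(X,\mu)$ with $\mu\inv(\Lambda)\subset{\rm Pic}(X)$, the conditions $h_1\in\mu(\ocK_X)$, $\sigma\subset\mu(\ocK_X)$, and $h_2\in\mu(\ocK_X)$ are all equivalent. So $\cM_{\Lambda,h_1}=\cM_{\Lambda,h_2}$ as subsets of $\cM$. The fundamental chamber $\fC_{(X,\mu)}\subset x^\perp\cap\Lambda^\perp\otimes\bR$ that governs the torsor structure in Theorem~\ref{qpol-thm} is determined by $\mu(\ocK_X)$ and is independent of the choice of $h$, so the two canonical identifications agree as maps into $\cM$ and $\iota_1=\iota_2$.

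For the converse, suppose $h_1,h_2$ lie in different small cones, separated by $\beta^\perp$ for a root $\beta\in L_{K3}\setminus\Lambda^\perp$ with $\la\Lambda,\beta\ra$ hyperbolic (Definition~\ref{small-cone}); replacing $\beta$ by $-\beta$ if needed, we may assume $\beta\cdot h_1>0>\beta\cdot h_2$. We pick a Picard-generic $x\in\beta^\perp\cap\bD_\Lambda$, meaning $x^\perp\cap L_{K3}$ equals the saturation of $\Lambda+\bZ\beta$; this is possible because, for each $\gamma\in L_{K3}$ outside this saturation, $\gamma^\perp\cap\beta^\perp\cap\bD_\Lambda$ is a proper analytic subset, and we need only avoid a countable union. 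Using Torelli surjectivity (Theorem~\ref{torelli-ii}) and the $\{\pm 1\}\times W_x$-action on markings (Corollary~\ref{cor:unpolarized-period-map}), we choose a marked K3 surface $(X,\mu)$ with period $x$ and $h_1\in\mu(\ocK_X)$. The class $\mu\inv(\beta)\in{\rm Pic}(X)$ is a root, so Riemann--Roch makes one of $\pm\mu\inv(\beta)$ effective; since effective curves pair nonnegatively with $\ocK_X$ while $\mu\inv(h_1)\cdot\mu\inv(\beta)=h_1\cdot\beta>0$, the effective choice is $+\mu\inv(\beta)$. Hence $\mu\inv(\beta)^\perp$ is a wall of $\ocK_X$, and $h_2\cdot\beta<0$ forces $\mu\inv(h_2)$ to pair negatively with an effective curve, so $h_2\notin\mu(\ocK_X)$. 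Therefore $(X,\mu)\in\cM_{\Lambda,h_1}\setminus\cM_{\Lambda,h_2}$ in $\cM$, and $\iota_1$ and $\iota_2$ disagree at this base point.

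The main technical point is the Picard-genericity argument, which singles out $\beta$ (up to sign) as the unique root of ${\rm Pic}(X)$ outside $\Lambda^\perp$ and thereby guarantees that the separating small-cone wall descends to an actual K\"ahler wall on the constructed K3. All remaining ingredients---Torelli surjectivity, Riemann--Roch for K3 surfaces, and the $\{\pm 1\}\times W_x$ action on markings---are standard tools from Section~\ref{analyticK3}.
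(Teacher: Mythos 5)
Your proof is correct and takes essentially the same route as the paper: reduce, via fineness of $\cM$, to the pointwise (in)equality of the subloci $\cM_{\Lambda,h_1},\cM_{\Lambda,h_2}\subset\cM$, use Proposition~\ref{cone-exists} for the same-cone direction, and for the converse produce by Torelli a marked K3 surface whose N\'eron--Severi lattice is the saturation of $\langle\Lambda,\beta\rangle$ for a wall $\beta^\perp$ separating the two cones. Your Picard-genericity and Riemann--Roch steps simply flesh out the parenthetical remark in the paper's one-line proof.
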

\begin{proof}
  By Proposition \ref{cone-exists}, 
  $\cM_{h_1} \subset \cM$ and $\cM_{h_2}\subset\cM$ 
  are the same subset whenever 
  $h_1,h_2$ lie in the same small cone. 
  Conversely, suppose $h_1,h_2$
  lie in distinct small cones. Then there exists 
  a root $\beta\in L_{K3}$ for which 
  $\langle \Lambda,\beta\rangle \subset L_{K3}$ 
  is hyperbolic, $\beta\cdot h_1>0$, 
  and $\beta\cdot h_2<0$. By the hyperbolicity
  of $\langle \Lambda,\beta\rangle$, the 
  period domain $\bD_{\langle \Lambda,\beta\rangle}\subset \bD_\Lambda$ 
  is non-empty. So
  by Theorem \ref{qpol-thm}, there exists 
  a marked K3 surface $(X,\mu)\in \cM_{h_1}$
  for which the N\'eron--Severi group satifies
  $\langle \Lambda,\beta\rangle\subset \mu({\rm NS}(X))$.
  In particular, $\mu^{-1}(\beta)$ represents
  an effective class on $X$, and so
  $(X,\mu)\notin \cM_{h_2}$ as $\beta\cdot h_2<0$.
  Thus $\cM_{h_1}\neq \cM_{h_2}$ as subsets of $\cM$.
  
  Since $\cM$ is a 
  fine moduli space of marked K3 surfaces, 
  the statement follows.
\end{proof}

   As we see, the true datum in the definition of lattice quasipolarized K3 surfaces is a small cone $\sigma$, and an alternative notation for the moduli functor is $\underline{\cM}_{\Lambda, \sigma}$ for the moduli functor of marked $(\Lambda,\sigma)$-polarized K3 surfaces.

\begin{remark}
Since the number of $G_\Lambda$-orbits
of small cones is finite by Proposition \ref{small-cone-prop}, 
there is a sense in which the number of moduli 
spaces of $(\Lambda,\sigma)$-quasipolarized K3 surfaces 
for a fixed lattice $\Lambda$ is finite. 
An isometry $g\in G_\Lambda\subset O(L_{\rm K3})$ defines
a natural transformation
$\underline{\cM}_{\Lambda,\sigma}\to 
\underline{\cM}_{\Lambda,g(\sigma)}$
of moduli functors, given on objects
by $(X,\mu)\mapsto (X,g\circ \mu)$.  
\end{remark}

\begin{remark}\label{big-on-small}
A primitive integral vector $L$ of positive norm 
$L^2$, in the closure of $\sigma$, 
defines a big and nef line bundle on every
$(\Lambda,\sigma)$-quasipolarized K3 surface $(X,j)$. So any such $L$
defines an inclusion of $\mathcal{M}_{\Lambda,h}\hookrightarrow \mathcal{M}_{\Z L}$
by restriction $j\mapsto j\vert_{\Z L}$. Every $L\in \Lambda$ of 
positive square lies in the closure of at least one,
but possibly more than one, small cone.

If one defines $\underline{\cM}_{\Lambda,h}$ as in
Definition \ref{def:mqpol}, but allowing any 
(not necessarily very irrational)
choice of vector $h\in \Lambda\otimes \bR$, $h^2>0$, 
then  the space $\cM_{\Lambda,h}$ representing
this functor is still a 
(non-separated)
smooth complex analytic space
of dimension $20-r$,
as in Theorem \ref{qpol-thm}. But the fiber
of the period map is a finite 
union of $W_x(\Lambda^\perp)$-orbits, one for each 
small cone $\sigma$ containing $h$ in its closure.
\end{remark}

\begin{definition}
Let $\mathcal{F}_{\Lambda,\sigma}^{\rm q}$ denote the moduli stack of
$(\Lambda,\sigma)$-quasipolarized K3 surfaces, 
for a fixed small cone $\sigma$. 
Here ${\rm q}$
stands for ``quasipolarized.''
\end{definition}

Note that the families parameterized 
by $\mathcal{F}_{\Lambda,\sigma}^{\rm q}$ do not
have a marking.

\begin{theorem}\label{F-lambda-q}
There is an isomorphism of stacks
$\mathcal{F}_{\Lambda,\sigma}^{\rm q} = [\mathcal{M}_{\Lambda,\sigma}:\Gamma]$ where
$$\Gamma:=\{\gamma\in O(L_{K3})\,:\, \gamma\vert_\Lambda = {\rm id}_\Lambda \}$$
is the group of changes-of-marking. 
In particular, for different choices of the small cone $\sigma$ these stacks are naturally isomorphic.
The quotiented period map
$\mathcal{M}_{\Lambda,\sigma}/\Gamma\to \mathbb{D}_\Lambda/\Gamma$ is a bijection.
\end{theorem}

\begin{proof} The first part follows
formally from Theorem \ref{qpol-thm}; note that for two
small cones $\sigma_1, \sigma_2$ the 
canonical isomorphism $\cM_{\Lambda,\sigma_1}\to 
\cM_{\Lambda,\sigma_2}$ is $\Gamma$-equivariant. The second
part follows from the fact that $\Gamma\supset W_x(\Lambda^\perp)$
for any $x\in \bD_\Lambda$ and so $\Gamma$ acts transitively
on the fibers of the period map $P$.\end{proof}

\begin{remark}
  The action of $\Gamma$ on $\mathbb{D}_\Lambda$ (and thus also on
  $\mathcal{M}_\Lambda$) is properly discontinuous because $\mathbb{D}_\Lambda$ is
  a Type IV Hermitian symmetric domain for the group
  $\Gamma_\bR=O(2,20-r)$. So by the theorem of Baily-Borel
  \cite{baily1966compactification-of-arithmetic},
  $ \mathbb{D}_\Lambda/\Gamma$ is a quasiprojective variety. The period
  domain $\mathbb{D}_\Lambda$ has two connected components, interchanged by
  complex conjugation $x\mapsto \overline{x}$, and so the quotient by
  $\Gamma$ has either one or two connected components, depending on
  whether $\Gamma$ does or doesn't have an element interchanging the
  two components of $\mathbb{D}_\Lambda$.
\end{remark}

\begin{remark}\label{rem:dolgachev}
	
   As defined in \cite{dolgachev1996mirror-symmetry}, the space of marked
  ``$\Lambda$-polarized quasi ample" K3s, using our
  notations, is 
  \begin{displaymath}
     \cM_\Lambda' := 
     \bigcup_{h\in \cK_\Lambda} \cM_{\Lambda,h} =
     \bigcup_{\sigma\subset \cK_\Lambda} \cM_{\Lambda,\sigma}
     \subset \cM 
  \end{displaymath} 
  with $h$ ranging over all very irrational vectors, resp. with $\sigma$ ranging over all small cones
  contained in a fixed open Weyl chamber $\cK_\Lambda\subset\cC_\Lambda$ for $W(\Lambda)$. 
  Then \cite[Thm. 3.1]{dolgachev1996mirror-symmetry}
   claims that $P\inv(x)$ is a torsor over the finite group $W_x(\Lambda^\perp)$.
  In fact, with this definition, the fiber generally is not a torsor
  over any group, and it may be infinite 
  if $\cK_\Lambda$ contains infinitely many small cones $\sigma$.
  
  For example, start with a marked surface $(X,\mu)$ with a period $x$
  such that the lattice $\Pic(X)$ is reflective, e.g.~$U\oplus E_8$. 
  The chambers of the Weyl group $W_x$ cover
  the positive cone of $(U\oplus E_8)\otimes \bR$ and projectivize to
  hyperbolic polytopes of finite volume. 
  Let us assume that $\Lambda\subset U\oplus E_8$ is a lattice of rank $2$ and no roots, e.g. a lattice from Example~\ref{exa:small-cones2}. 
  Then the condition that $\cK_\Lambda=\cC_\Lambda$ contain
  a big and nef class is equivalent to the condition that
  the hyperbolic line $\bP \cC_\Lambda$ passes
  through the hyperbolic polytope 
  $$\mathcal{P}:=\mathbb{P}\mu(\ocK_X)
  \cap (U\oplus E_8)\otimes \bR.$$    
But any isometry $g \in W_x$ for which 
$g(\mathcal{P})\cap \mathbb{P}\cC_\Lambda\neq \emptyset$
  gives an ``$\Lambda$-polarized quasi-ample"
  K3 surface $(X,g\circ \mu)$ in
  the sense of \cite{dolgachev1996mirror-symmetry}.
Each of these chambers gives a point in the fiber 
of the period map. 

For an anisotropic $\Lambda$, the set of such chambers is infinite. 
  (Instead of a lattice or rank $2$, one could also take a lattice with $\rank\Lambda\ge3$ to the same effect, cf. Corollary~\ref{cor:many-cones}.)
And for a generic $\Lambda$, the set of such chambers is not naturally a torsor over any group. 
If, as in \cite{dolgachev1996mirror-symmetry}, one defines the moduli space of $\Lambda$-quasipolarized K3 surfaces to be $\cF'_\Lambda := \cM'_\Lambda / \Gamma$, then the map $\cM'/\Gamma\to \bD/\Gamma$ is not a bijection. Instead, for the lattices $\Lambda$ in this example, this map has infinite fibers.
 
We can furthermore
pick $\Lambda$ and $x$ 
in such a way that the Weyl group 
$W(x^\perp\cap \Lambda^\perp)$ is trivial. Then the
polarizations will be ample, not merely semiample.

For marked ``$\Lambda$-polarized ample K3 surfaces" 
as in \cite{dolgachev1996mirror-symmetry},
the period map $P$ is only an isomorphism onto 
its image on the sublocus
$\cM_\Lambda^{\circ\circ}\subset \cM_\Lambda^\circ \subset \cM_\Lambda$
where 
all roots $\beta\in \NS(X)$ 
satisfy $\beta^\perp\cap \cK_\Lambda=\emptyset$. 
The image
$P(\cM_\Lambda^{\circ\circ})$
is \begin{align*}\bD_\Lambda^{\circ\circ}:=
\{x\in \bD_\Lambda\,:\, &x\cdot \beta\neq 0 
\textrm{ for all roots }\beta\in L_{K3}
\textrm{ with } \\
&
\langle \Lambda,\beta\rangle\textrm{ hyperbolic and }(\beta_\Lambda)^2<0\}.\end{align*}
This locus is a complement of 
finitely
many $\Gamma$-orbits of Heegner divisors
in $\bD_\Lambda$ but it may contain the 
perpendiculars of vectors in $\Lambda^\perp$
of norm not equal to $-2$. 
As in the proof of Proposition \ref{small-cone-prop},
the finiteness of $\Gamma$-orbits
follows from the fact that 
$(\beta_{\Lambda^\perp})^2\geq -2$,
$\beta_{\Lambda^\perp}\subset (\Lambda^\perp)^*$,
and $\Lambda^\perp$ has finite 
index in $(\Lambda^\perp)^*$.
\end{remark}

\section{ADE moduli}\label{sec:ade-moduli}

We now describe a modification of the
above moduli problems which produces a separated moduli 
space and stack.
Theorem \ref{smooth-coarse} below is
known to experts, but to the authors' knowledge
there is no standard reference. 

The notion of ``lattice-polarized" is usually discussed only 
for smooth K3 surfaces as a special case of 
``lattice-quasipolarized", as in Corollary~\ref{mpolcor}.
where it gives an open subset of $\bD_\Lambda/\Gamma$, the complement of hyperplanes $\alpha^\perp$ for some some roots $\alpha$.
Here, instead we extend it to K3 surfaces with ADE singularities, and
the coarse moduli space becomes
the entire space $\bD_\Lambda/\Gamma$.

For us, a lattice-polarized ADE K3 surface 
is the {\it ample model} of a 
lattice-quasipolarized smooth K3 surface 
$(X,j)$, that is 
$\oX = \Proj \oplus_{d\ge0} H^0(X,L^d)$,
associated to a semiample line bundle $L\in j(\Lambda)$.

In Definition~\ref{def:lambda-pol1} and Theorem~\ref{smooth-coarse}, we begin with the case when $h$, the first Chern class of $L$, is generic, i.e.~lies in some (open) small cone $\sigma$. Then, in Definition~\ref{def:generalized small cone} and Corollary~\ref{cor:F_Lambda for any h}, we tackle the general case, when $h$ belongs to a face $\tau\subset\overline{\sigma}$ of a small cone, which we call a generalized small cone.

\begin{definition}\label{def:lambda-pol1}
Let $h\in \Lambda\otimes \bR$ be a 
vector contained in a
small cone $\sigma\subset \Lambda\otimes \bR$.

A {\it $(\Lambda,h)$-polarized K3 surface} 
$(\oX,j)$ is a
possibly singular surface $\oX$ with 
at worst rational double point (ADE) singularities,
whose minimal resolution $X\to \oX$ is a smooth
K3 surface, together with an isometric embedding
$j:\Lambda\hookrightarrow {\rm Pic}(\oX)$ for which
$j(h)$ is ample. 

A family of $(\Lambda,h)$-polarized K3 surfaces is a flat family $f\colon\overline{\cX}\to S$ of K3 surfaces with at worst ADE singularities together with a homomorphism $\Lambda\to \Pic_{\overline{\cX}/S}(S)$ such that every fiber is a $(\Lambda,h)$-polarized K3 surface with ADE singularities.
\end{definition}

For the same
reasons as Propositions \ref{cone-exists}, 
 \ref{same-functor}, the
functor of $(\Lambda,h)$- and 
$(\Lambda,h')$-quasipolarized K3 surfaces 
is the same, for all $h$, $h'$ in the same
small cone $\sigma$.
Thus, we may as well consider an
$h'\in\sigma$ which is {\it integral}.
Then, for a $(\Lambda,h)$-quasipolarized family
$\cX\to S$, the $S$-morphism 
$\cX\to \overline{\cX}$ defined by passing
to the ample model for $h'$
contracts all $(-2)$-curves in 
$\Lambda^\perp$ and produces a 
$(\Lambda,h)$-polarized
family.

\begin{remark}
In fact, if $h_1\in \sigma_1$ and $h_2\in \sigma_2$
lie in distinct small cones,
the $(\Lambda,h_1)$- and $(\Lambda,h_2)$-polarized K3 surfaces $X_1, X_2$
corresponding to a period $x\in \beta^\perp$, for $\beta$ defining
a wall separating $h_1$ and $h_2$,
are {\it not} isomorphic, since
$h_1$ is ample on $X_1$ but {\it not}
ample on $X_2$.  For this reason,
we cannot remove $h$ from the
definition.
\end{remark}

Consider a point $x\in\bD_\Lambda$. The stabilizer
$\Gamma_x\subset \Gamma$ of a point $x\in \mathbb{D}_\Lambda$
is the group of all Hodge isometries that fix the embedding $J:\Lambda\hookrightarrow L_{K3}$.
But the only Hodge isometries that are induced by
automorphisms are those that preserve the K\"ahler cone.
Thus there is an exact sequence of groups
$$0\to W_x(\Lambda^\perp)\to  \Gamma_x \to {\rm Aut}(\overline{X},j)\to 0$$
The inertia of the moduli stack of $(\Lambda,h)$-polarized K3 surfaces at the point
corresponding to $(\overline{X},j)$ is $\Gamma_x/W_x(\Lambda^\perp)$
whereas the inertia of the stack $[\mathbb{D}_\Lambda:\Gamma]$ at the point
$x$ is certainly $\Gamma_x$. 

To resolve this problem, 
consider an open neighborhood $U_x\ni x$ 
in $\mathbb{D}_\Lambda$
preserved by the stabilizer $\Gamma_x$. 
Take the quotient of
$U_x$ by $W_x(\Lambda^\perp)$. Since $W_x(\Lambda^\perp)\subset\Gamma_x$ is a normal
subgroup, the quotient group $\Gamma_x/W_x(\Lambda^\perp)={\rm Aut}(\overline{X},j)$
acts on the coarse space $U_x/W_x(\Lambda^\perp)$. 

\begin{definition}\label{colon-W}
Define DM stack 
$[\bD_\Lambda:_W\Gamma]$ by gluing local charts of the form
$[U_x/W_x(\Lambda^\perp):\Gamma_x/W_x(\Lambda^\perp)]$
ranging over all $\Gamma$-orbits 
$\Gamma\cdot x\subset \bD_\Lambda$.
\end{definition}

\begin{proposition} $[\bD_\Lambda:_W\Gamma]$ is 
a smooth and separated DM stack.  \end{proposition}

\begin{proof}
Since the quotient
$U_x/W_x(\Lambda^\perp)$ of a complex manifold 
by a reflection group is again
a complex manifold, we see that
$[\bD_\Lambda:_W \Gamma]$ is a smooth stack.
It is separated
because it factors the coarsening morphism
$[\bD_\Lambda\colon \Gamma]
\to [\bD_\Lambda:_W \Gamma] 
\to \bD_\Lambda/\Gamma$.
\end{proof}

\begin{theorem}\label{smooth-coarse}
For any $h$ contained in
a small cone $\sigma$,
the coarse moduli space $F_{\Lambda,h}$ of
$(\Lambda,h)$-polarized K3 surfaces is isomorphic to
$\mathbb{D}_\Lambda/\Gamma$, and the moduli
stack $\cF_{\Lambda,h}$ is
isomorphic to the separated DM
stack $[\bD_\Lambda:_W \Gamma]$.
In particular, both are independent of $h$.
\end{theorem}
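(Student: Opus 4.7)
The plan is to bootstrap from the marked quasipolarized case (Theorem~\ref{qpol-thm}, Corollary~\ref{F-lambda-q}) to the unmarked ADE polarized case by two operations: \emph{passing to the ample model}, which converts a quasipolarized family into a polarized ADE family and collapses the local $W_x(\Lambda^\perp)$-ambiguity, and \emph{forgetting the marking}, which quotients by the global change-of-marking group $\Gamma$. The precise lattice comparison of the smooth and ADE worlds was set up in Section~\ref{sec:smooth-vs-ADE}, and will be lifted here from the pointwise level to the level of stacks.

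First, I would construct a canonical morphism of stacks $\cM_\Lambda \to \cF_{\Lambda,h}$. Since small cones are locally rational polyhedral by Proposition~\ref{small-cone-prop}, fix an integral vector $h' \in \sigma$ with $(h')^2 > 0$. For a marked family $(f\colon \cX \to S, \mu) \in \cM_\Lambda(S)$, the class $j(h') \in \Pic(\cX/S)$ determines a relatively semiample line bundle $\cL_{h'}$ by Proposition~\ref{cone-exists}, so the relative ample model $\overline{\cX} := \operatorname{Proj}_S \bigoplus_{n\geq 0} f_*\cL_{h'}^{\otimes n} \to S$ is a flat family of ADE K3 surfaces carrying an inherited lattice embedding $\Lambda \hookrightarrow \Pic(\overline{\cX}/S)$, with $j(h)$ ample on each fiber because $h$ and $h'$ lie in the same small cone.

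Second, I would identify the fibers of this morphism. For a fixed $(\overline{X},j) \in \cF_{\Lambda,h}$ with period $x \in \bD_\Lambda$, the roots $\beta \in \Lambda^\perp \cap \NS(X)$ are in bijection with the $(-2)$-curves contracted by the minimal resolution $X \to \overline{X}$, so reflections in these roots act trivially on $\overline{X}$; conversely, two markings $\mu_1, \mu_2$ with the same ADE ample model differ by an element of $W_x(\Lambda^\perp)$ composed with a change-of-marking in $\Gamma$. Thus locally over a $\Gamma_x$-invariant neighborhood $U_x$ of $x$, the morphism presents $\cF_{\Lambda,h}$ as the chart $[U_x/W_x(\Lambda^\perp) : \Gamma_x/W_x(\Lambda^\perp)]$ of Definition~\ref{colon-W} (using the étale $W_x(\Lambda^\perp)$-torsor $P\colon \cM_\Lambda \to \bD_\Lambda$ to pass between $\cM_\Lambda$ and $\bD_\Lambda$). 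Gluing across $\bD_\Lambda$ yields $\cF_{\Lambda,h} \cong [\bD_\Lambda :_W \Gamma]$, and the coarse space is $\bD_\Lambda/\Gamma$ by Corollary~\ref{F-lambda-q}. Independence of $h$ is automatic since both sides depend only on $\sigma$, and different small cones give isomorphic stacks via $\Gamma$-translation.

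The main obstacle will be \textbf{verifying separatedness} of $\cF_{\Lambda,h}$, the feature that distinguishes the ADE moduli from the non-separated smooth quasipolarized case and makes it a DM stack. By the valuative criterion, this reduces to uniqueness of ADE fillings of a degenerating family over a punctured disk, which was already settled in Section~\ref{sec:smooth-vs-ADE}: any two such fillings admit simultaneous crepant resolutions after a cyclic base change by \cite{brieskorn1970singular-elements}, are related by a flop in the central fiber, but produce the same ample model because both are computed from $\cL_{h'}$. A secondary technical point is that the ample model construction must commute with arbitrary base change and preserve flatness; this follows from relative Kawamata--Viehweg vanishing applied to $\cL_{h'}$, which ensures the section algebra $\bigoplus_n f_*\cL_{h'}^{\otimes n}$ is finitely generated and $S$-flat with vanishing higher direct images for $n \gg 0$.
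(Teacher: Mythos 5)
Your overall strategy matches the paper's: pass to the relative ample model of an integral $h'\in\sigma$, get separatedness and the coarse space from the uniqueness of ADE fillings (Section~\ref{sec:smooth-vs-ADE} and Corollary~\ref{F-lambda-q}), and identify the local charts with those of Definition~\ref{colon-W}. But there is a genuine gap at the key step, the identification of the \emph{stack} structure. Your argument for the chart $[U_x/W_x(\Lambda^\perp):\Gamma_x/W_x(\Lambda^\perp)]$ is a fiber-level computation: you show that two marked quasipolarized surfaces have the same ample model iff they differ by $W_x(\Lambda^\perp)$ (up to $\Gamma$). That identifies the points of $\cF_{\Lambda,h}$, i.e.\ the coarse space, but not the moduli functor. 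To know that $\cF_{\Lambda,h}$ is locally the quotient chart you must show that the versal deformation space of the \emph{singular} polarized surface $(\oX,j)$ is the germ of $\bD_\Lambda/W_x(\Lambda^\perp)$ at $x$ --- equivalently, that every flat family of $(\Lambda,h)$-polarized ADE surfaces is, locally on the base, pulled back from $U_x/W_x(\Lambda^\perp)$, i.e.\ admits a simultaneous crepant resolution after a $W_x(\Lambda^\perp)$-Galois base change. This is not a consequence of knowing the fibers of $\cM_\Lambda\to\cF_{\Lambda,h}$: a priori ${\rm Def}(\oX)$ could be singular, non-reduced, or strictly larger than the image of ${\rm Def}(X)$. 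The paper supplies exactly this input by combining unobstructedness of deformations of ADE K3 surfaces \cite{burns1974local-contributions} with Brieskorn's theorem \cite{brieskorn1970singular-elements} that ${\rm Def}(X)\to{\rm Def}(\oX)$ is a Galois cover with group $\prod_i W_{p_i}=W_x(\Lambda^\perp)$. You invoke Brieskorn only in the separatedness discussion; it is equally indispensable here, and without it the step ``the morphism presents $\cF_{\Lambda,h}$ as the chart'' is an assertion rather than a proof.

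Two smaller points. First, even granting the descent of the universal ADE family from $U_x$ to $U_x/W_x(\Lambda^\perp)$, you should note that this uses the uniqueness of ADE fillings again: the family and its pullback by $g\in W_x(\Lambda^\perp)$ agree away from the Heegner divisors and hence everywhere by separatedness. Second, your appeal to Kawamata--Viehweg for flatness and base change of the relative ample model is a reasonable substitute for the paper's more implicit treatment, and the final observation that ${\rm Aut}(\oX,j)$ fixes $j(h)$ and hence acts on the ample model (making the isotropy groups independent of $h$) is present in your fiber analysis, so that part is fine.
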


\begin{proof}

Given any family $\pi\colon \mathcal{X}\to S$ of
$(\Lambda,h)$-quasipolarized K3 surfaces, 
there exists a simultaneous contraction
of the positive roots in $\Lambda^\perp$ on any fiber,
by taking 
$\cX\to \overline{\cX} = \Proj_S \oplus_{d\ge0} \pi_* \cL^d,$
for a relatively semiample element 
$\cL\in {\rm Pic}(\cX/S)$ that represents
an integral vector $h'\in \sigma$.
The result is a family of $(\Lambda,h)$-polarized K3
surfaces $\overline{\mathcal{X}}
\to S$.

Let $C\subset S$
be a curve, let $0\in C$ and $C^*:=C\setminus 0$. 
As in Section \ref{sec:smooth-vs-ADE}, 
the family $\overline{\mathcal{X}}$
depends only on the classifying morphism 
$C^*\to [\mathcal{M}_\Lambda:\Gamma]$
to the $(\Lambda,h)$-quasipolarized moduli stack
because we passed to the relatively ample model.
We deduce from Corollary \ref{F-lambda-q}
that the coarse moduli space $F_{\Lambda,h}$
is separated, and isomorphic to
$F_\Lambda \simeq (\cM_\Lambda/\Gamma)^{\rm sep}= 
\bD_\Lambda/\Gamma$.

To identify the stack structure on $\cF_{\Lambda,h}$
as $[\bD_\Lambda :_W \Gamma]$, 
we first prove that the Kuranishi space 
of $\oX$ is represented by the 
germ of $(\bD/W_x(\Lambda^\perp), \,x)$. 
This deformation theory statement follows from unobstructedness (\cite{burns1974local-contributions}, 
cf.~also \cite[Thm.~2.2]{namikawa2001deformation-theory})
together with 
\cite{brieskorn1970singular-elements},
which imply that the morphism of deformation
functors ${\rm Def}(X)\to 
{\rm Def}(\oX)$ given by simultaneous
contraction to a configuration of ADE
surface singularities $(\oX, \,p_i)\in \oX$ 
is a Weyl group cover 
$\prod_i W_{p_i} = W_x(\Lambda^\perp)$.

It follows that 
$\cF_{\Lambda,h}$ is locally identified
with the stack 
$$[\bD_\Lambda/W_x(\Lambda^\perp):\Gamma_x/W_x(\Lambda^\perp)]$$
defining the $:_W$-quotient. In terms of automorphism groups:
Any element of ${\rm Aut}(\oX,j)$ fixes $j(h)$ and thus
descends to the ample model for $h$, which is why 
the isotropy groups of $\cF_{\Lambda,h}$ 
are the same for any $h$.
\end{proof}

\begin{definition}\label{def:generalized small cone}
A {\it generalized small cone} 
$\tau$ is the relative interior
of a (locally) polyhedral face of some small cone $\sigma$,
which contains a positive norm vector.
\end{definition}

Note that every $h\in \Lambda\otimes \bR$, $h^2>0$,
lies in a unique generalized small cone $\tau$.

\begin{remark}
Removing the condition in Definition
\ref{def:lambda-pol1} that $h$ lies in a small
cone is subtle. In general,
the linear system defined by some 
integral vector in the generalized 
small cone $\tau\ni h$,
will contract $(-2)$-curves whose classes
do not lie in~$\Lambda^\perp$. Thus,
we do not always have an embedding of $\Lambda$
into ${\rm Pic}(\oX)$, for the ample model
$X\to \oX$ with respect to $h$.
\end{remark}

\begin{definition}\label{def:lambda-pol2}
Let $h\in \Lambda\otimes \bR$, $h^2>0$,
be {\it any} positive norm vector.
A family of {\it $(\Lambda,h)$-polarized 
K3 surfaces} is a flat family
$f\colon \overline{\cX}\to S$
of ADE K3 surfaces, together with
a simultaneous partial resolution
$\cX\to S$ to a family
of $(\Lambda,h')$-polarized K3
surfaces, where $h'\in \sigma$
lies in a specified small cone $\sigma$
containing $h$ 
in its closure. Here, by
a partial resolution $\cX\to \overline{\cX}$, 
we mean a proper birational contraction over $S$,
which fiberwise over $s\in S$ sits 
between $\overline{\cX}_s$ and 
its minimal resolution.
\end{definition}

\begin{theorem}\label{generalized-thm-lemma}
Let $\tau\ni h$ be the unique generalized
small cone containing a fixed vector
$h\in \Lambda\otimes \bR$, $h^2>0$.
The stack
of $(\Lambda,h)$-polarized K3 surfaces,
and the family $\overline{\cX}$ over it,
are independent of the choice of 
a small cone
$\sigma$ containing $\tau$ in its closure.
Furthermore, for all small cones
$\sigma$ containing $\tau$
in their closure, 
$\overline{\cX}$ admits a
simultaneous crepant resolution
to a family of $(\Lambda,\sigma)$-polarized K3 surfaces.
\end{theorem}

\begin{proof}
Let $h'\in \sigma$ be an element of the
chosen small cone $\sigma$ involved in 
Definition \ref{def:lambda-pol2}, of
the moduli stack of $(\Lambda,h)$-polarized
K3 surfaces.

We claim
there is an isomorphism of functors
$\cF_{\Lambda,h}\to \cF_{\Lambda,h'}$. On the one hand, by definition,
the universal family
over the former stack
is endowed with a simultaneous partial 
resolution to a family of 
$(\Lambda,h')$-polarized K3 surfaces, giving
the forward map. Conversely, there
is an inverse map $\cF_{\Lambda,h'}\to \cF_{\Lambda,h}$ by passing to the ample
model for some integral vector in the generalized
small cone $\tau\ni h$, which is relatively
semiample on any family of 
$(\Lambda,h')$-polarized
K3 surfaces.

By Theorem 
\ref{smooth-coarse}, for two small cones 
$\sigma_1$ and $\sigma_2$ containing $\tau$
in their closure, the moduli stacks $\cF_{\Lambda,\sigma_1}$
and $\cF_{\Lambda,\sigma_2}$ are 
naturally isomorphic.
Next, the $h$-ample models 
$\overline{\cX}_1$ and $\overline{\cX}_2$ 
of the universal families 
$\cX_{\sigma_1}\to \cF_{\Lambda,\sigma_1}$
and $\cX_{\sigma_2}\to \cF_{\Lambda,\sigma_2}$
for $\sigma_1$ and $\sigma_2$ 
are isomorphic, since $h$ is relatively ample
on these contractions. Thus
the universal family $\overline{\cX}_1\simeq 
\overline{\cX}_2\simeq \overline{\cX}$ is independent
of the choice of $\sigma$.
\end{proof}

\begin{corollary}\label{cor:F_Lambda for any h}
With respect to Definition
\ref{def:lambda-pol2}, the 
statement of Theorem \ref{smooth-coarse}
holds for any $h\in \Lambda\otimes \bR$, $h^2>0$.
\end{corollary}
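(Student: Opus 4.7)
The plan is to deduce the corollary directly from Lemma~\ref{generalized-thm-lemma} together with the already-established Theorem~\ref{smooth-coarse}; no new moduli-theoretic work should be needed. Given $h\in\Lambda\otimes\bR$ with $h^2>0$, I would let $\tau$ be the unique generalized small cone containing $h$ and pick any small cone $\sigma$ whose closure contains $\tau$, together with any $h'\in\sigma$ (conveniently taken to be integral and very irrational so that Theorem~\ref{qpol-thm} and Theorem~\ref{smooth-coarse} apply to $h'$ verbatim).

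By Lemma~\ref{generalized-thm-lemma}, the stack $\cF_{\Lambda,h}$ of Definition~\ref{def:lambda-pol2} is canonically equivalent to $\cF_{\Lambda,h'}$, via the functor passing to the simultaneous crepant resolution on one side and to the $h$-ample model on the other. Since $h'$ lies in a small cone, Theorem~\ref{smooth-coarse} yields $\cF_{\Lambda,h'}\simeq [\bD_\Lambda:_W \Gamma]$ and $F_{\Lambda,h'}\simeq \bD_\Lambda/\Gamma$. Composing the two equivalences gives the corresponding identifications for $\cF_{\Lambda,h}$ and $F_{\Lambda,h}$, and independence of $h$ falls out immediately since the right-hand sides do not mention $h$.

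The one point worth pausing on is that the equivalence $\cF_{\Lambda,h}\simeq\cF_{\Lambda,h'}$ must not depend on the auxiliary choice of $\sigma\supset\tau$ or of $h'\in\sigma$; otherwise the identification with $[\bD_\Lambda:_W\Gamma]$ would be ambiguous. This is exactly the content of the independence assertion in Lemma~\ref{generalized-thm-lemma}, combined with the fact (already established in the proof of Theorem~\ref{qpol-thm} and inherited by Theorem~\ref{smooth-coarse}) that the canonical biholomorphisms $\cM_{\Lambda,\sigma_1}\to \cM_{\Lambda,\sigma_2}$ between different small-cone models are $\Gamma$-equivariant and induce the identity after quotienting. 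I do not anticipate a substantive obstacle beyond verifying this compatibility at the level of $:_W$-quotients, which follows mechanically from the normality of $W_x(\Lambda^\perp)\subset\Gamma_x$ used in Definition~\ref{colon-W}. In short, the corollary is a formal consequence: Lemma~\ref{generalized-thm-lemma} does the geometric work of reducing to the small-cone case, and Theorem~\ref{smooth-coarse} then supplies the identification with $[\bD_\Lambda:_W\Gamma]$.
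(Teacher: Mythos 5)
Your proposal is correct and matches the paper's (implicit) argument exactly: the corollary is a formal consequence of Lemma~\ref{generalized-thm-lemma}, which reduces Definition~\ref{def:lambda-pol2} to the small-cone case handled by Theorem~\ref{smooth-coarse}. One inessential slip: $h'$ cannot be simultaneously integral and very irrational (an integral vector lies in a proper saturated sublattice once $\rank\Lambda\ge 2$), but this does not matter since Theorem~\ref{smooth-coarse} applies to \emph{any} $h'$ in a small cone.
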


\begin{remark}
The map $\cF^{\rm q}_{\Lambda,h}\to \cF_{\Lambda,h}$ 
is modeled on the following simple example. 

Let $\bA^1_t\cup \bA^1_t$ be a ``line with doubled origin", 
a non-separated scheme 
obtained by gluing two affine lines 
$\bA^1_t = \Spec \bC[t]$ along $\bA^1\setminus 0$, and let
$\bA^1_{t^2} = \Spec\bC[t^2]$. 
Over a transverse arc slicing the Heegner divisor
$\{x\in \bD_\Lambda\,:\,\beta\cdot x=0\}$, $\beta\in \Lambda^\perp$,
the stack 
$\cF^{\rm q}_{\Lambda,h}$ is locally modeled on 
$(\bA^1_t\cup \bA^1_t)/W$, $W=S_2=\langle r_\beta\rangle$, acting on each 
$\bA^1_t$ by $t\to -t$ and interchanging the two origins.
So the action of $W$ is free. This quotient is a non-separated 
algebraic space (it would be a non-separated stack if we
divided by a bigger group $\Gamma\supset W$). 
The stack $\cF_{\Lambda,h}$ in this case is modeled 
on the scheme $\bA^1_{t^2}$ and the map 
$\cF^{\rm q}_{\Lambda,h}\to\cF_{\Lambda,h}$ is modeled on 
\begin{displaymath}
    (\bA^1_t\cup\bA^1_t)/S_2 \to \bA^1_t/S_2 = \bA^1_{t^2}.
\end{displaymath}
As we see, the morphism $\cF_{\Lambda,h}^{\rm q}\to \cF_{\Lambda,h}$ is a ramified $W_t$-cover for all $t\in \bA^1_t$.
\end{remark}

Suppose that 
a non-separated DM stack $\cF$ admits charts 
$[U:G]$,
for which the union of charts $[U^{\rm sep}:G]$
is a separated DM stack $\cF^{\rm sep}$. 
The stack $\cF_{\Lambda,h}^{\rm q}$ satisfies this
condition, and with respect to this notion of separated
quotient, \cite[Thm.~2.11]{alexeev2023compact} is false, i.e.~$(\cF_{\Lambda,h}^{\rm q})^{\rm sep}\not\simeq \cF_{\Lambda,h}$.
Though \cite[Rem.~2.12]{alexeev2023compact}
describes the correct stack structure
on $\cF_{\Lambda,h}$ as in Definition \ref{colon-W} 
and Theorem \ref{smooth-coarse}.

It is unclear what is the most general set-up in which
one has the separated ``$:_W$-quotient'' that appears in
Theorem \ref{smooth-coarse}. We expect that
roughly, one needs a split exact sequence as in 
(\ref{exact})---that is, 
a normal subgroup of the relevant group
of Hodge isometries, which acts on the different
birational models which appear as
quasipolarized fillings 
of punctured families. 

For instance, the results in
this paper should extend to moduli
of polarized primitive symplectic varieties $(\oX,\oL)$, 
which play the role of ADE K3 surfaces,
and a $\bQ$-factorial terminalization $X\to \oX$,
which plays the role of the quasipolarized
minimal resolution, see \cite{bakker2022global-moduli} and 
\cite[Thm.~3.7]{lehn2024morrison-kawamata}. 

In this regard, we note some existing literature about the moduli spaces of smooth irreducible holomorphic symplectic 
manifolds:~\cite{camere2016lattice-polarized,camere2018some-remarks, joumaah2016non-symplectic}.

\bibliographystyle{amsalpha}

\def\cprime{$'$}
\providecommand{\bysame}{\leavevmode\hbox to3em{\hrulefill}\thinspace}
\providecommand{\MR}{\relax\ifhmode\unskip\space\fi MR }
\providecommand{\MRhref}[2]{%
  \href{http://www.ams.org/mathscinet-getitem?mr=#1}{#2}
}

\end{document}